\documentclass[11pt,reqno]{amsproc}
\usepackage[margin=1in]{geometry}
\usepackage{amsmath, amsthm, amssymb}
\usepackage[colorlinks=true, pdfstartview=FitV, linkcolor=blue,citecolor=blue, urlcolor=blue]{hyperref}
\usepackage[abbrev,lite,nobysame]{amsrefs}
\usepackage{times}
\usepackage[usenames,dvipsnames]{color}

\usepackage{mathtools}


\mathtoolsset{showonlyrefs=true}
\usepackage{dsfont}

\providecommand{\ip}[1]{\langle#1\rangle}
\providecommand{\abs}[1]{\left\lvert#1\right\rvert}
\providecommand{\norm}[1]{\left\|#1\right\|}

\newcommand{\N}{\mathbb{N}}
\newcommand{\Z}{\mathbb{Z}}
\newcommand{\Lin}{\mathcal{L}}
\newcommand{\PP}{\mathbb{P}_0}
\newcommand{\NP}{\mathbb{P}_{\neq}}
\newcommand{\Pk}{\mathbb{P}_{k}}

\def\eps{\varepsilon}
\def\e{{\rm e}}
\def\dd{{\rm d}}

\def\ddt{{\frac{\dd}{\dd t}}}
\def\R {\mathbb{R}}

\def \l {\langle}
\def \r {\rangle}
\def\T {{\mathbb T}}
\def\de{{\partial}}

\newtheorem{proposition}{Proposition}[section]
\newtheorem{theorem}[proposition]{Theorem}
\newtheorem{corollary}[proposition]{Corollary}
\newtheorem{lemma}[proposition]{Lemma}
\newtheorem*{bootstrap*}{Bootstrap Step}
\theoremstyle{definition}
\newtheorem{definition}[proposition]{Definition}
\newtheorem{remark}[proposition]{Remark}

\numberwithin{equation}{section}

\title[Navier-Stokes equations near the Poiseuille flow]{Enhanced dissipation in the Navier-Stokes equations near the Poiseuille flow}

\author[M.\ Coti Zelati]{Michele Coti Zelati}
\address{Department of Mathematics, Imperial College London, London, SW7 2AZ, UK}
\email{m.coti-zelati@imperial.ac.uk}
\author[T.M.\ Elgindi]{Tarek M. Elgindi}
\address{Department of Mathematics, UC San Diego, La Jolla, CA 92093, USA}
\email{telgindi@ucsd.edu}
\author[K.\ Widmayer]{Klaus Widmayer}
\address{Institute of Mathematics, EPFL, Station 8, 1015 Lausanne, Switzerland}
\email{klaus.widmayer@epfl.ch}

\subjclass[2010]{35Q30, 35Q35, 76D05, 76E05}

\keywords{Navier-Stokes equations, enhanced dissipation, Poiseuille flow, metastability, transition threshold}

\begin{document}

\begin{abstract}
We consider solutions to the 2d Navier-Stokes equations on $\mathbb{T}\times\mathbb{R}$ close to the Poiseuille flow, with small viscosity $\nu>0$. Our first result concerns a semigroup estimate for the linearized problem. Here we show that the $x$-dependent modes of linear solutions decay on a time-scale proportional to $\nu^{-1/2}|\log\nu|$. This effect is often referred to as \emph{enhanced dissipation} or \emph{metastability} since it gives a much faster decay than the regular dissipative time-scale $\nu^{-1}$ (this is also the time-scale on which the $x$-independent mode naturally decays). We achieve this using an adaptation of the method of hypocoercivity. 

Our second result concerns the full nonlinear equations. We show that when the perturbation from the Poiseuille flow is initially of size at most $\nu^{3/4+}$, then it remains so for all time. Moreover, the enhanced dissipation also persists in this scenario, so that the $x$-dependent modes of the solution are dissipated on a time scale of order $\nu^{-1/2}|\log\nu|$. This transition threshold is established by a bootstrap argument using the semigroup estimate and a careful analysis of the nonlinear term in order to deal with the unboundedness of the domain and the Poiseuille flow itself. 
\end{abstract}

\vspace*{2.2cm}

\maketitle

\tableofcontents

\section{Introduction}
In this paper we consider the two-dimensional Navier-Stokes equations
\begin{align}\label{eq:NSEvel}
\begin{cases}
\de_t U+(U\cdot \nabla) U+\nabla P-\nu\Delta U=0,\\
\nabla\cdot U=0,
\end{cases}
\end{align}
posed on the boundary-less domain $\T\times\R$, where $\T=[0,2\pi)$ is a periodic interval. Here,  $U=(U^1,U^2)$ is the
velocity vector field and $P$ is the scalar pressure of an incompressible fluid of uniform density, and the kinematic viscosity 
$\nu>0$ is proportional to the inverse of the Reynolds number. Setting $\nabla^\perp=(-\de_y,\de_x)$ the rotation of the gradient vector, the
vorticity $\Omega:=\nabla^\perp\cdot U=-\de_y U^1+\de_xU^2$ satisfies the active scalar equation
\begin{align}\label{eq:NSEvort}
\begin{cases}
\de_t \Omega+U\cdot \nabla \Omega-\nu\Delta \Omega=0,\\
U=\nabla^\perp \Psi, \quad \Delta\Psi=\Omega,
\end{cases}
\end{align}
where $\Psi$ is the corresponding stream-function.

A widely studied stationary solution to \eqref{eq:NSEvel} is the so-called
\emph{Poiseuille flow}, given by $U^{S}(x,y)=(y^2,0)$ and $P^{S}(x,y)=2\nu x$. There are several reasons for which this flow
is of basic importance: On the one hand, it is the two-dimensional version of the three-dimensional pipe flow studied by Reynolds in his famous experiments 
\cite{Reynolds83}, the subtleties of which are yet to be understood  \cite{WPKM08}. It is also the prototypical example of a strictly convex shear flow, whose stability properties have been widely studied in the physics literature since Rayleigh \cites{rayleigh1879, DrazinReid81}. Moreover, it is the simplest non-trivial example of a shear flow on $\T\times\R$ besides the Couette flow $(y,0)$.

The main goal of this article is to prove quantitative stability results for the Poiseuille flow. By writing $U=(y^2,0)+u$, with corresponding $\Omega=-2y+\omega$, we can rewrite \eqref{eq:NSEvort} as
\begin{align}\label{eq:NSEpois}
\begin{cases}
\de_t \omega+y^2\de_x\omega -2\de_x\psi-\nu\Delta \omega=-u\cdot \nabla \omega,\\
u=\nabla^\perp \psi, \quad \Delta\psi=\omega.
\end{cases}
\end{align}
Here $u:\R\times\T\times\R\to\T\times\R$ and $\omega,\psi:\R\times\T\times\R\to\R$ are thought of as perturbations of the velocity, vorticity and stream-function around the Poiseuille flow. 

This way the question of stability of the Poiseuille flow can be rephrased in terms of the behavior of (small data) solutions to \eqref{eq:NSEpois}. Our analysis is divided into two parts: Firstly, in Section \ref{sec:LinEn} we investigate the decay properties of the linear part of \eqref{eq:NSEpois}. Here we will see that solutions decay at a faster than dissipative time scale -- an effect referred to as \emph{enhanced dissipation}. Secondly, Section \ref{sec:nonlin} shows that there is a threshold for the size of the initial data in a suitable norm, below which this fast decay also persists in the nonlinear case. Here the size of the threshold is given in terms of the viscosity.
In the remainder of this introduction we discuss our results in more detail, and give an overview of the paper.

\subsection{Linear enhanced dissipation}
In our first main result, we study the decay properties
of the semigroup generated by the linear operator 
\begin{align}
 \Lin=-y^2\de_x+2\de_x\Delta^{-1}+\nu\Delta,
\end{align}
in the  weighted $L^2$ space $X$ normed by
\begin{align}\label{eq:normX}
\| f\|^2_X:=\| f\|^2_{L^2}+\| y f\|^2_{L^2}.
\end{align}
We will prove the following theorem.
\begin{theorem}\label{thm:linenhanced}
Let $\nu<1$ and let $\Pk$ denote the projection to the sum of the $\pm k$-th Fourier modes in $x$, $k\in\N$. Then there exist $\eps_0\geq \frac{1}{20}$ and $C_0\in(1,10)$
such that for every $k\neq 0$ there holds the semigroup estimate
\begin{align}
\| \e^{\Lin t}\Pk \|_{X\to X}\leq C_0  \e^{-\eps_0\lambda_{\nu,k} t},  \qquad \forall t\geq 0,
\end{align}
where
\begin{align}
\lambda_{\nu,k}=\frac{\nu^{1/2}|k|^{1/2}}{1+|\log \nu|+\log |k|}.
\end{align}
The constants $\eps_0$ and $C_0$ can be explicitly computed.
\end{theorem}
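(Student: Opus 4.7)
The plan is to Fourier-decompose in $x$ and apply hypocoercivity (in Villani's sense) mode by mode. Writing $\omega=\sum_k e^{ikx}\hat\omega_k(y)$, the operator $\Lin$ commutes with $\Pk$ and acts on each mode as
\begin{equation*}
\Lin_k := \nu(\partial_y^2 - k^2) - iky^2 + 2ik(\partial_y^2 - k^2)^{-1},
\end{equation*}
so it suffices to prove a uniform-in-$k\neq 0$ semigroup bound for $e^{\Lin_k t}$ on $L^2(\R;(1+y^2)\,\mathrm{d}y)$. Split $\Lin_k = S_k + A_k$ into its self-adjoint dissipative part $S_k := \nu(\partial_y^2 - k^2)$ and its skew-adjoint part $A_k := -iky^2 + 2ik(\partial_y^2 - k^2)^{-1}$.

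The algebraic input for the enhanced rate is the commutator structure of $A_k$ with $\partial_y$. A direct computation gives $[A_k,\partial_y] = 2iky + \text{(bounded nonlocal remainder)}$, which vanishes at $y=0$ and so does not, by itself, produce coercivity. However the iterated commutator $[\partial_y,[A_k,\partial_y]] = 2ik + \ldots$ is nondegenerate; this is the algebraic fingerprint of enhanced dissipation at the rate $\nu^{1/2}|k|^{1/2}$, characteristic of a shear profile with a simple critical point. Accordingly, I would construct an augmented energy functional of the form
\begin{equation*}
\Phi_k(f) := \|f\|^2 + \|yf\|^2 + \alpha\|\partial_y f\|^2 + \beta\|\partial_y^2 f\|^2 + \gamma\,\mathrm{Re}\langle \partial_y f, iky f\rangle + \delta\,\mathrm{Re}\langle \partial_y^2 f, ik\partial_y f\rangle + \ldots,
\end{equation*}
with positive $\alpha,\beta,\gamma,\delta$ depending on $\nu$ and $k$, chosen so that $\Phi_k(f)$ is comparable to $\|f\|_X^2$ with ratios in $(1,C_0^2)$ and such that along $\partial_t f = \Lin_k f$
\begin{equation*}
\tfrac12\tfrac{\mathrm{d}}{\mathrm{d}t}\Phi_k(f) \leq -\eps_0\lambda_{\nu,k}\,\Phi_k(f).
\end{equation*}
The self-adjoint part contributes strictly negative terms of the form $-\nu$ times suitable derivative norms; the skew-adjoint part contributes only through the commutators above. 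Optimizing the coefficients by balancing the dissipative sinks with the coercive gains via Cauchy--Schwarz and Young inequalities should yield precisely $\lambda_{\nu,k}\sim\nu^{1/2}|k|^{1/2}$.

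The main obstacle will be reconciling two different regimes in $y$. Away from $y=0$ the first commutator $2iky$ is non-trivial and gives a Couette-like bound that forces one scaling of the parameters, while near $y=0$ one must rely on the double-commutator (which is why one needs the extra term $\beta\|\partial_y^2 f\|^2$ in $\Phi_k$) and a different scaling. Matching these across all dyadic scales in $y$, from $|y|\sim 1$ down to a viscous cut-off, is what produces the logarithmic correction $1+|\log\nu|+\log|k|$ in $\lambda_{\nu,k}$. The weight $\|yf\|^2$ in the norm $X$ is forced upon us here because the integrations by parts used to control the skew-adjoint cross terms in $\Phi_k$ produce $\|yf\|$-type quantities that can only be absorbed if they are already part of the norm. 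Finally, the explicit constants $\eps_0\geq 1/20$ and $C_0\in(1,10)$ are recovered by carefully tracking the multiplicative constants in the equivalence $\Phi_k\sim\|\cdot\|_X^2$ and in the above differential inequality.
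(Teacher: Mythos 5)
Your overall strategy (mode-by-mode hypocoercivity with an augmented quadratic functional) is the same as the paper's, but two of your central claims do not hold as stated, and they sit exactly where the content of the theorem lies. First, you assert that $\Phi_k$ can be chosen \emph{comparable to} $\|\cdot\|_X^2$ with ratios in $(1,C_0^2)$ while containing $\alpha\|\partial_yf\|^2$ and $\beta\|\partial_y^2f\|^2$ with positive coefficients. This is impossible: no matter how small $\alpha,\beta>0$ are, such a functional is infinite on generic $f\in X$, so the upper bound $\Phi_k\leq C_0^2\|f\|_X^2$ fails. The paper's functional (with $\alpha\sim\nu^{1/2}|k|^{-1/2}$ in front of $\|\nabla\omega_k\|^2$, a cross term $\langle y\partial_x\omega_k,\partial_y\omega_k\rangle$, and the combination $\|y\partial_x\omega_k\|^2+2\|\nabla\partial_x\psi_k\|^2$) only controls data with $\nabla\omega^{in},\,y\partial_x\omega^{in}\in L^2$, and the passage to an $X\to X$ semigroup bound is a separate argument: one uses the energy identity and the mean value theorem to find $t^*\lesssim(\nu|k|)^{-1/2}$ at which $\|\nabla\omega(t^*)\|^2\leq\eps_0(|k|/\nu)^{1/2}\|\omega^{in}\|^2$, runs the $\Phi$-decay from $t^*$ onward, and absorbs the resulting loss of $(|k|/\nu)^{1/2}$ into the exponential for $t\geq T_{\nu,k}$. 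That absorption is what produces the logarithm in $\lambda_{\nu,k}$ and the constant $C_0$. Your explanation of the logarithm (matching dyadic scales in $y$ between a Couette-like region and a neighbourhood of the critical point) is therefore not the mechanism: the differential inequality for $\Phi_k$ holds with the clean rate $\nu^{1/2}|k|^{1/2}$, with no logarithm and no decomposition in $y$; the degeneracy of the commutator $[\partial_y,\,iky^2]=2iky$ at $y=0$ is handled by the uncertainty-principle inequality $\|\omega\|^2\leq\sigma\|\partial_y\omega\|^2+\sigma^{-1}\|y\omega\|^2$, not by a double commutator or an $H^2$ term.

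Second, your functional omits the stream function, and the ``bounded nonlocal remainder'' $2ik\Delta_k^{-1}$ cannot be discarded. Differentiating $\|y\partial_x\omega\|^2$ along the flow produces $+\nu\|\partial_x\omega\|^2-4\langle y\partial_{xy}\psi,\partial_{xx}\psi\rangle$, terms with no sign which, once the coefficients are scaled, are of exactly the critical size $\nu^{1/2}|k|^{1/2}\|\omega\|^2$ that you are trying to extract. The paper cancels them \emph{identically} by adding $2\|\nabla\partial_x\psi\|^2$ to the functional, so that $\frac{d}{dt}\bigl[\|y\partial_x\omega\|^2+2\|\nabla\partial_x\psi\|^2\bigr]\leq0$; this monotonicity is also used in the short-time part of the transfer argument above. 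Without this term, or a substitute for it, neither the differential inequality nor the $X\to X$ estimate closes. To salvage your outline you would need to (i) drop the claim of norm equivalence and add the regularization/transfer step, (ii) replace the double-commutator and $\|\partial_y^2f\|^2$ terms by the weighted interpolation above, and (iii) build the stream-function cancellation into the functional.
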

By summing over all $k\in\N$, we can re-state the above result in the following unified way. Here, we need to require that the zeroth $x$-frequency (i.e. the
$x$-average of the solution) vanishes. Since the projections $\Pk$ and $\Lin$ commute, this property is preserved by the flow generated by the semigroup.
\begin{corollary}\label{cor:linenhanced}
Let $\nu<1$ and $\omega^{in} \in X$, and assume that for almost every $y\in \R$ we have
\begin{align}\label{eq:xmeanfree}
\int_\T \omega^{in}(x,y)\dd x=0.
\end{align}
Then
\begin{align}\label{eq:lindecay}
\| \e^{\Lin t} \omega^{in}\|_{X}\leq C_0  \e^{-\eps_0\lambda_{\nu} t}\|\omega^{in}\|_{X},  \qquad \forall t\geq 0,
\end{align}
where
\begin{align}\label{eq:rate}
\lambda_{\nu}=\frac{\nu^{1/2}}{1+|\log \nu|}.
\end{align}
\end{corollary}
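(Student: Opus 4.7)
The plan is to decompose the datum into Fourier modes in $x$ and apply Theorem \ref{thm:linenhanced} to each mode. Since $\Lin$ involves only $\de_x$, $\Delta^{-1}$, and multiplication by functions of $y$, each projection $\Pk$ (being a Fourier multiplier in $x$) commutes with $\Lin$, and hence with $e^{\Lin t}$. The mean-free hypothesis \eqref{eq:xmeanfree} amounts to $P_0\omega^{in}=0$, a property preserved by the flow, so the evolution decomposes across the modes $k\geq 1$ only.

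Second, since the weight $y$ is independent of $x$, the $X$-norm is orthogonal across Fourier modes: by Plancherel, $\|f\|_X^2 = \sum_{k\geq 0}\|\Pk f\|_X^2$. Combining this with Theorem \ref{thm:linenhanced} mode by mode yields
\begin{align}
\|e^{\Lin t}\omega^{in}\|_X^2 = \sum_{k\geq 1}\|e^{\Lin t}\Pk\omega^{in}\|_X^2 \leq C_0^2\sum_{k\geq 1} e^{-2\eps_0\lambda_{\nu,k}t}\|\Pk\omega^{in}\|_X^2.
\end{align}
To conclude, it suffices to establish the uniform comparison $\lambda_{\nu,k}\geq c\,\lambda_\nu$ for all $k\in\N$ and $\nu\in(0,1)$, with some absolute constant $c>0$; then one bounds $e^{-2\eps_0\lambda_{\nu,k}t}\leq e^{-2c\eps_0\lambda_\nu t}$, pulls this prefactor out of the sum, and uses $\sum_{k\geq 1}\|\Pk\omega^{in}\|_X^2 \leq \|\omega^{in}\|_X^2$. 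The factor $c$ is absorbed into the exponent at the cost of replacing $\eps_0$ by $c\eps_0$, which is permitted thanks to the explicit margin $\eps_0\geq \frac{1}{20}$ in Theorem \ref{thm:linenhanced}.

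The only real step is thus the scalar comparison
\begin{align}
k^{1/2}(1+|\log\nu|)\geq c\,(1+|\log\nu|+\log k) \qquad \text{for all } k\in\N, \ \nu\in(0,1).
\end{align}
This is an elementary calculus exercise: for $\nu$ small enough (say $\nu\leq \tfrac{1}{2}$) it holds with $c=1$, since the worst ratio $\log k/(k^{1/2}-1)$ is attained at $k=2$ and equals $\log 2/(\sqrt{2}-1)\approx 1.67$, which is exceeded by $1+|\log\nu|$ in that range. For $\nu\in(\tfrac{1}{2},1)$, both $\lambda_{\nu,k}$ and $\lambda_\nu$ remain comparable to positive absolute constants (uniformly in $k\geq 1$ and $\nu$ in this range), so a smaller $c$ works. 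The main conceptual point to keep in mind is that $k\mapsto \lambda_{\nu,k}$ is not monotone when $\nu$ is close to $1$, so a naive ``the worst mode is $k=1$'' argument requires a minor adjustment; this is purely a scalar nuisance and carries no analytical content.
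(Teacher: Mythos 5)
Your proof is correct and follows exactly the route the paper intends: the paper offers no separate argument for this corollary beyond the remark that it follows ``by summing over all $k\in\N$'' in Theorem \ref{thm:linenhanced}, which is precisely your orthogonal mode-by-mode decomposition (using that $\Pk$ commutes with $\Lin$ and that the weight $y$ preserves $x$-frequency support) combined with the scalar comparison of $\lambda_{\nu,k}$ against $\lambda_\nu$. Your observation that $\lambda_{\nu,k}\geq\lambda_\nu$ can actually fail for $\nu$ close to $1$ (e.g.\ $k=2$ and $\nu\gtrsim 0.51$) is a genuine, if minor, subtlety that the paper silently skips; it costs only a harmless absolute constant, though strictly speaking it means the $\eps_0$ in the corollary should be read as a fixed fraction of the $\eps_0$ from the theorem rather than literally the same constant.
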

Estimate \eqref{eq:lindecay} is a quantitative account of linear \emph{enhanced dissipation}: initial data in $X$ satisfying \eqref{eq:xmeanfree} 
decay exponentially at a rate proportional to $\nu^{1/2}$ (up to a logarithmic correction), which is much faster than the heat equation rate proportional to $\nu$.
This is a manifestation of metastability: the nonzero $x$-frequencies decay at the fast time-scale $O(\lambda_\nu^{-1})$, while the $x$-independent modes have a much longer relaxation time proportional to  $O(\nu^{-1})$.

The phenomenon of enhanced dissipation has been widely studied in the physics literature \cites{BL94,LB01,Lundgren82,rhines1983rapidly}, 
and has recently received a lot of 
attention from the mathematical community: from the seminal article \cite{CKRZ08}, quantitative questions have been addressed in the context of
passive scalars \cites{CZDE18, BCZGH15, BCZ15, BW13, WEI18} and Navier-Stokes equations near the Couette flow \cites{BMV14, WZ18, GNRS18}, as well as Lamb-Oseen vortices \cites{Gallay2017, LiWeiZhang2017}. However, the only other quantitative result on the
Navier-Stokes equations near a shear flow (analogous to Theorem \ref{thm:linenhanced}) has been obtained in \cites{WZZkolmo17, IMM17} for the Kolmogorov flow $(\sin y,0)$. In these works, the analysis required a careful and complicated study of the spectral properties of the linear operator $\Lin$ and its resolvent. 

Our result is the first of its kind for the Navier-Stokes equations on $\T\times \R$ (excluding the explicitly solvable Couette flow, already analyzed by Lord Kelvin in 1887 \cite{Kelvin87}). In stark contrast to the aforementioned case of the Kolmogorov flow, our result is proved via a simple energy method based on a modification of the so-called hypocoercivity framework \cite{Villani09}.
The pertinent ideas are discussed in detail in Section \ref{sec:LinEn}: We present first our claim in terms of a hypocoercivity functional (Theorem \ref{thm:phi_k} in Section \ref{ssec:main_hypo}), then give an overview of the energy estimates that enter into it (Section \ref{ssec:prelim_enest}). Subsequently, Section \ref{ssec:hypoco} establishes its proof. To conclude, in Section \ref{sub:semi} we deduce Corollary \ref{cor:fast_decay}, which implies Theorem \ref{thm:linenhanced}. We remark that in all of these considerations one can treat the $x$ frequencies separately, since they are naturally decoupled in the linear equations.

\subsection{Nonlinear transition stability threshold}
Our second result is concerned with the \emph{nonlinear} asymptotic stability of the Poiseuille flow. This question is also related to what is known as \emph{subcritical transition}: for the present scenario, it was conjectured by Lord Kelvin \cite{Kelvin87} that indeed the flow is stable, but that the stability threshold decreases as $\nu\to0$. For any real system this then entails transition at a finite Reynolds number. 
Our result confirms this behavior rigorously for the Poiseuille flow. We give an explicit size in terms of powers of the viscosity $\nu$, such that initial data below this threshold  (in a suitable norm) yield global solutions \emph{that exhibit enhanced dissipation}.
More precisely, our result is as follows.
\begin{theorem}\label{thm:threshold-main}
 For every $\mu,C_1>0$, there exists $\nu_0>0$ such that if $0<\nu\leq\nu_0$ and 
 \begin{equation}\label{eq:trans}
  \norm{\omega^{in}}_{X} + \norm{y \PP u^{1,in}}_{L^2}\leq C_1\nu^{3/4+2\mu},
 \end{equation}
 then there exists a unique global solution $\omega$ to the initial value problem for \eqref{eq:NSEpois} with $\omega(0)=\omega^{in}$. 
 Moreover, the modes with $k\neq 0$ exhibit an enhanced dissipation rate as in the linear case, namely
 \begin{equation}
  \norm{\NP\omega(t)}_{X}\leq 2C_0\e^{-\frac{\eps_0}{1+\log2C_0}\lambda_\nu t}\norm{\NP\omega^{in}}_{X},\qquad \forall t\geq 0,
 \end{equation}
 where $\NP=1-\PP$, while the $k=0$ mode remains bounded, i.e.\ for a universal constant $C_2>0$ we have
 \begin{equation}
  \norm{\PP\omega(t)}_{X}\leq C_2\norm{\omega^{in}}_{X},\qquad \forall t\geq 0.
 \end{equation}
\end{theorem}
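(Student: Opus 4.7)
\emph{Setup and bootstrap.} The proof will run a bootstrap argument exploiting the fact that the projections $\PP$ and $\NP$ commute with $\Lin$ and with the nonlinear structure (the only nonlinear interactions feeding into $\NP$ are $\PP\times\NP$, $\NP\times\PP$ and $\NP\times\NP$, and those feeding into $\PP$ are purely $\NP\times\NP$), while Corollary \ref{cor:linenhanced} provides the enhanced-dissipation semigroup bound on $\NP$. Given $\omega^{in}$ satisfying \eqref{eq:trans}, local well-posedness in $X$ gives a solution on some $[0,T_0]$. Let $T^*$ denote the supremum of $T>0$ such that
\begin{align}
\|\NP\omega(t)\|_{X} &\leq 2C_0\,\e^{-\eps_1\lambda_\nu t}\|\NP\omega^{in}\|_{X},\\
\|\PP\omega(t)\|_{X} &\leq C_2\|\omega^{in}\|_{X},\\
\|y\,\PP u^1(t)\|_{L^2} &\leq C_3\bigl(\|y\,\PP u^{1,in}\|_{L^2}+\|\omega^{in}\|_{X}\bigr),
\end{align}
hold on $[0,T]$, where $\eps_1=\eps_0/(1+\log 2C_0)$ and $C_2,C_3$ are to be fixed. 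The goal is to show that each estimate can be improved by a factor of $1/2$ on $[0,T^*]$, forcing $T^*=+\infty$.

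\emph{Nonzero modes via Duhamel.} For $\NP\omega$ write
\begin{equation}
\NP\omega(t)=\e^{\Lin t}\NP\omega^{in}-\int_0^t\e^{\Lin(t-s)}\NP(u\cdot\nabla\omega)(s)\,\dd s,
\end{equation}
and use Corollary \ref{cor:linenhanced} on the linear part. The nonlinearity splits into $\PP u\cdot\nabla\NP\omega$, $\NP u\cdot\nabla\PP\omega$ and $\NP(\NP u\cdot\nabla\NP\omega)$; each is written in divergence form thanks to $\nabla\cdot u=0$, so one derivative can be moved onto the kernel and absorbed through the short-time heat smoothing $\|\partial\,\e^{\Lin s}\NP\|_{X\to X}\lesssim(\nu s)^{-1/2}\e^{-\eps_1\lambda_\nu s}$ (a consequence of combining the parabolic gain with Corollary \ref{cor:linenhanced}). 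The remaining factors are bounded using the bootstrap hypotheses together with the elliptic recovery $u=\nabla^\perp\Delta^{-1}\omega$: on nonzero modes $\|u\|_{L^\infty_xL^2_y}+\|\partial u\|_{L^2}\lesssim\|\NP\omega\|_X$, while on the zero mode $\|\PP u^1\|_{L^\infty}$ is controlled by $\|y\,\PP u^1\|_{L^2}^{1/2}\|\PP\omega\|_{L^2}^{1/2}$. Together these yield
\begin{equation}
\int_0^t\|\e^{\Lin(t-s)}\NP(u\cdot\nabla\omega)(s)\|_{X}\,\dd s\lesssim\nu^{-1+o(1)}\bigl(\|\omega^{in}\|_{X}+\|y\,\PP u^{1,in}\|_{L^2}\bigr)\e^{-\eps_1\lambda_\nu t}\|\NP\omega^{in}\|_{X},
\end{equation}
and \eqref{eq:trans} turns the $\nu^{-1+o(1)}\cdot\nu^{3/4+2\mu}=\nu^{-1/4+2\mu+o(1)}$ prefactor into something arbitrarily small, closing the first bootstrap estimate.

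\emph{Zero mode and auxiliary quantity.} For $\PP\omega$ the equation is the one-dimensional heat equation $\de_t\PP\omega-\nu\de_y^2\PP\omega=-\PP(\NP u\cdot\nabla\NP\omega)$, and the right-hand side is quadratic in the nonzero modes. Testing against $\PP\omega$ in $L^2$, and against $y^2\PP\omega$ for the weighted piece (treating the commutator $[\de_y^2,y^2]$ as a lower-order term, as in the linear theory of Section~\ref{sec:LinEn}), the bootstrap decay of $\NP\omega$ is integrable in time and produces an increment of size $\lambda_\nu^{-1}\|\NP\omega^{in}\|_X^2\sim\nu^{-1/2+o(1)}\nu^{3/2+4\mu}\ll\|\omega^{in}\|_X$, improving the second bound. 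The bound on $\|y\,\PP u^1\|_{L^2}$ is propagated in the same fashion: one derives the equation for $\PP u^1=-\de_y^{-1}\PP\omega$ (the zero-mode pressure being trivial in this 1D reduction) and runs a weighted energy estimate with weight $y^2$, again seeing only $\NP\times\NP$ forcing.

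\emph{Main obstacle.} The delicate term throughout is $\PP u\cdot\nabla\NP\omega$: because $\PP u^1$ enjoys no enhanced dissipation, all smallness must come from the prefactor, and $\PP u^1=-\de_y^{-1}\PP\omega$ is controlled in $L^2$ only after multiplication by $y$. This is precisely why the weighted norm $X$ and the auxiliary quantity $\|y\,\PP u^{1,in}\|_{L^2}$ appear in the hypothesis \eqref{eq:trans}. Balancing the $\nu^{-1/2}$ loss from transferring a derivative through the heat kernel against the $\lambda_\nu^{-1}\sim\nu^{-1/2+o(1)}$ gain from integrating the exponential decay yields the $\nu^{-1+o(1)}$ loss above, and requiring this to be beaten by $\|\omega^{in}\|_X$ is exactly what pins down the transition threshold at $\nu^{3/4+}$; the parameter $\mu>0$ provides the room needed to absorb the logarithmic factors hidden in $\lambda_\nu$.
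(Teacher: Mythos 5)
Your overall architecture (a bootstrap on the decaying $X$-norm of $\NP\omega$, Duhamel with the semigroup bound of Corollary \ref{cor:linenhanced}, the observation that $\PP\omega$ is forced only by $\NP\times\NP$ interactions, and the identification of $u_s\de_x\tilde\omega$ as the delicate term requiring propagation of $\|y\,\PP u^1\|_{L^2}$) matches the paper's. But there are two genuine gaps. First, your treatment of the nonlinearity hinges on the smoothing estimate $\|\partial\,\e^{\Lin s}\NP\|_{X\to X}\lesssim(\nu s)^{-1/2}\e^{-\eps_1\lambda_\nu s}$, asserted as ``a consequence of combining the parabolic gain with Corollary \ref{cor:linenhanced}.'' This is established nowhere and is not routine: $\Lin$ contains the unbounded drift $-y^2\de_x$ and the nonlocal term $2\de_x\Delta^{-1}$, and the norm $X$ carries the weight $y$, so short-time gradient regularization in $X\to X$ (including the weighted component $\|y\,\partial\,\e^{\Lin s}f\|_{L^2}$) would require its own hypocoercive argument with attention to the commutators of $y$ with the flow. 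The paper deliberately avoids needing any such estimate: no derivative is moved onto the kernel; instead $\|\NP(u\cdot\nabla\omega)\|_X$ is bounded directly, and the derivative falling on $\omega$ is absorbed through the time-integrated bounds $\int_0^t\|\nabla\tilde\omega\|_{L^2}^2\lesssim\nu^{-1}\|\omega^{in}\|_X^2$ and $\int_0^t\|D^2\omega\|_{L^2}^2\lesssim\nu^{-1+2\mu}$ (Lemmas \ref{lem:tilde-en} and \ref{lem:D2omega}), combined with Gagliardo--Nirenberg interpolation for $\|u\|_{L^\infty}$ and the commutator identity $[y,\Delta^{-1}]=2\Delta^{-2}\de_y$ for the weighted part of the norm, which your sketch does not address at all.

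Second, your bookkeeping does not close as written: you arrive at a loss of $\nu^{-1+o(1)}$, and $\nu^{-1+o(1)}\cdot\nu^{3/4+2\mu}=\nu^{-1/4+2\mu+o(1)}$ is \emph{not} small for small $\mu$ --- it diverges as $\nu\to0$, so your own accounting contradicts the claimed threshold. For $\nu^{3/4+}$ to work the total loss must be at most $\nu^{-3/4+o(1)}$; in the paper this comes out as, e.g., $C_\delta\nu^{-(3+\delta)/4}\|\omega^{in}\|_{L^2}\|\tilde\omega^{in}\|_X$ in the estimate of $\int_0^{T_\nu}\e^{\eps_0\lambda_\nu\tau}\|\tilde u\cdot\nabla\omega\|_{L^2}\,\dd\tau$, beaten by choosing $\delta=\mu$. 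Even within your scheme the relevant convolution $\int_0^t(\nu(t-s))^{-1/2}\e^{-\eps_1\lambda_\nu(t-s)}\e^{-2\eps_1\lambda_\nu s}\,\dd s$ costs $\nu^{-1/2}\lambda_\nu^{-1/2}\sim\nu^{-3/4+o(1)}$ rather than $\nu^{-1}$, so the exponent count must be redone carefully; as stated, the bootstrap does not improve the constant and the argument fails.
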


The basic setting for the proof of this theorem is a perturbative analysis based on the properties of the linear flow as studied in Section \ref{sec:LinEn}. At its heart is the idea to bootstrap global control over the $X$ norm, which we used in order to obtain the linear enhanced dissipation. We thus start by discussing basic properties and well-posedness of the full, nonlinear equation in the $X$ norm (see Section \ref{ssec:basics}). Since the arguments for the bootstrap itself vitally require a fine understanding of the nonlinear structure of the equations, we then give an account of the relevant details (Section \ref{ssec:struct}). Subsequently, Section \ref{ssec:prelim} illustrates how control of the $X$ norm yields some crucial energy estimates that will be used later. Finally, the proof of the claim via bootstrap is carried out in Section \ref{ssec:thresh-proof}.

The above Theorem \ref{thm:threshold-main} is the first such result for the Poiseuille flow, and it is worth highlighting again its relatively straightforward proof. To the best of our knowledge, outside the realm of monotone shear flows there is only one similar result: the article \cite{WZZkolmo17} establishes an analogous claim for the Kolmogorov flow (see also \cite{LWZ18} for results in three-dimensions). As in our paper, the nonlinear result therein is obtained by means of a perturbative analysis. This relies on the linear estimates established via spectral methods, which are significantly more complicated than our energy estimates, as mentioned above. In addition, the nonlinear analysis makes crucial use of an additional effect called inviscid damping. While this is in principle also present in our case, it plays no role in our analysis. This is a clear advantage, since at present it seems that inviscid damping for the Poiseuille flow is still not well understood in the unbounded domain $\T\times \R$. In contrast to \cite{WZZkolmo17}, one 
additional difficulty that we face in our analysis is the presence of the weight in the norm $X$, since our domain and the gradient of the Poiseuille flow are unbounded as $\abs{y}\to\infty$: this requires a finer analysis of the nonlinear terms in order to close the desired energy estimate. 

In the case of monotone shear flows, lately the Couette flow has attracted a lot of attention. We mention \cites{BVW16, CLWZ18, BGM15I, BGM15II, BGM15III, Liss18} and the
recent survey \cite{BGM18} for a variety of results with data in Sobolev and Gevrey spaces in both two and three space dimensions.

\begin{remark}
It is not clear if the decay rate $\lambda_\nu$ in \eqref{eq:rate} is sharp, although it coincides with the passive scalar rate in \cite{BCZ15}. The logarithmic correction
is likely a technical matter. Similarly, the exponent 3/4 in the transition threshold \eqref{eq:trans} is also unclear to be sharp; in fact, it may depend on the choice
of the norm, for which there is no clear physical justification. Our norm $X$ arises as a natural energy of the system.
\end{remark}

\section{Linear enhanced dissipation}\label{sec:LinEn}
This section is devoted to the proof of Theorem \ref{thm:linenhanced}. Setting  $\omega(t)=\e^{\Lin t} \omega^{in}$, we have that
\begin{align}\label{eq:poison}
\begin{cases}
\de_t\omega +y^2\de_x \omega -2\de_x\psi=\nu\Delta \omega,\\
\Delta\psi=\omega,
\end{cases}
\end{align}
with initial datum $\omega(0)=\omega^{in}$. Via an expansion of $\omega$ (and $\psi$) as a Fourier series in the $x$ variable, namely
\begin{align}
\omega(t,x,y)=\sum_{k\in \Z} a_k(t,y)\e^{ikx}, \qquad a_k(t,y)=\frac{1}{2\pi}\int_\T\omega(t,x,y)\e^{-ikx}\dd x,
\end{align} 
for $k\in\N_0$ we set
\begin{equation}
 \omega_k(t,x,y):=\sum_{\abs{l}=k} a_k(t,y)\e^{ikx}.
\end{equation}
This way we may express $\omega=\sum_{k\in\N_0}\omega_k(t,x,y)$ as a sum of \emph{real-valued} functions $\omega_k$ that are localized in $x$-frequency on a single band $\pm k$, $k\in\N_0$. We thus see that \eqref{eq:poison} decouples in $k$ and becomes an infinite system of one-dimensional equations.

\subsection*{Notation conventions for Section \ref{sec:LinEn}}
In what follows, we will use $\|\cdot\|$ and $\l\cdot,\cdot\r$ for the standard real $L^2$ norm and scalar product, respectively. 
We will not distinguish between one and two dimensional $L^2$ spaces, as no dimensional property will be used.

\subsection{The main result}\label{ssec:main_hypo}

The main result of this section is a decay estimate on $\omega_k$ for $k\in\N$. 

\begin{theorem}\label{thm:phi_k}
 Let $\nu>0$ and $k\in\N$. Then there exists $\eps_0\geq \frac{1}{20}$ such that the following holds true: there exist constants $\alpha_0,\beta_0,\gamma_0>0$ only depending on $\eps_0$ such that the energy functional
 \begin{equation}\label{eq:def_Phi_k}
  \Phi_k:=\frac12\left[\|\omega_k\|^2 + \frac{\alpha_0\nu^{1/2}}{\abs{k}^{1/2}} \|\nabla\omega_k\|^2+\frac{4\beta_0}{|k|} \l y\de_x \omega_k,\de_y\omega_k\r +\frac{\gamma_0}{\nu^{1/2}\abs{k}^{3/2}}\left(\|y\de_x \omega_k\|^2+ 2 \|\nabla \de_x\psi_k\|^2\right) \right]
 \end{equation}
 satisfies the differential inequality
  \begin{align}\label{eq:diffineq_final}
  \ddt \Phi_k +2\eps_0\nu^{1/2}\abs{k}^{1/2}\Phi_k+\frac{\alpha_0\nu^{3/2}}{2\abs{k}^{1/2}}\norm{\Delta\omega_k}^2+\frac{\gamma_0 \nu^{1/2}}{\abs{k}^{3/2}} \left[ \frac{7}{8}\norm{y \de_{x}\nabla\omega_k}^2+\|\de_x \omega_k \|^2\right]\leq 0
\end{align}
for all $t\geq 0$.
In particular, assuming that $\nabla\omega_k^{in},y\de_x\omega_k^{in}\in L^2$ we have
\begin{align}
 \Phi_k(t)\leq \e^{-2\eps_0\nu^{1/2}\abs{k}^{1/2}t}\Phi_k(0),\quad t\geq0.
\end{align}
\end{theorem}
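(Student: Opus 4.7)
The plan is a hypocoercivity-style energy estimate in the spirit of Villani's framework, adapted to the degenerate dissipation structure of the linearized Poiseuille problem. After projecting onto the $k$-th Fourier mode (where $\partial_x$ acts as a parameter $\pm ik$), the equation $\partial_t\omega_k + y^2\partial_x\omega_k - 2\partial_x\psi_k = \nu\Delta\omega_k$ retains full two-dimensional structure and each of the four blocks composing $\Phi_k$ must be differentiated in turn.

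First I would dispatch the two standard terms. One has $\frac{d}{dt}\tfrac{1}{2}\|\omega_k\|^2 = -\nu\|\nabla\omega_k\|^2$, since both the transport $y^2\partial_x$ and the stream-function forcing $-2\partial_x\psi_k$ are skew-adjoint in $L^2$ (the latter using $\langle \partial_x\psi_k,\Delta\psi_k\rangle=0$ via $x$-periodicity). Similarly, $\frac{d}{dt}\tfrac{1}{2}\|\nabla\omega_k\|^2 = -\nu\|\Delta\omega_k\|^2 - 2\langle y\partial_x\omega_k,\partial_y\omega_k\rangle$, where the mixed term arises from the commutator $[\partial_y, y^2\partial_x] = 2y\partial_x$ (the stream contribution vanishes after integration by parts using $\omega_k = \Delta\psi_k$). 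These identities already expose the core hypocoercivity mechanism: the commutator generates the quantity $\langle y\partial_x\omega_k, \partial_y\omega_k\rangle$, whose own evolution must then be used to recover the missing dissipation.

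The heart of the argument is the computation of $\frac{d}{dt}\langle y\partial_x\omega_k, \partial_y\omega_k\rangle$. The transport contribution splits into two pieces whose $y^3$-weighted parts cancel after one integration by parts in $x$, leaving exactly the desired definite-sign term $-2\|y\partial_x\omega_k\|^2$. The stream-function contribution, after integration by parts combined with $\omega_k=\Delta\psi_k$, produces a further term $+2\|\partial_x\nabla\psi_k\|^2$ of unfavourable sign together with a cross-error of the form $4\langle y\partial_y\omega_k, \partial_x^2\psi_k\rangle$, while the viscous contribution yields terms like $\nu\langle y\partial_x\omega_k,\Delta\partial_y\omega_k\rangle$ absorbable by Young's inequality into the dissipation provided by the $\alpha_0$ and $\gamma_0$ blocks. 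The $\gamma_0$-block derivative is computed by the same recipe, using the identity $\frac{d}{dt}\|\nabla\partial_x\psi_k\|^2 = 2\langle \partial_x^2\psi_k, \partial_t\omega_k\rangle$ (which follows from the symmetry of $\Delta^{-1}$); it produces the principal dissipation $-\gamma_0\nu^{1/2}|k|^{-3/2}\bigl(\tfrac{7}{8}\|y\partial_x\nabla\omega_k\|^2+\|\partial_x\omega_k\|^2\bigr)$ displayed in \eqref{eq:diffineq_final}, along with residual cross terms designed to pair with those from the $\beta_0$ block.

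Once all four pieces are assembled with the weights $(1,\ \alpha_0\nu^{1/2}|k|^{-1/2},\ 4\beta_0|k|^{-1},\ \gamma_0\nu^{-1/2}|k|^{-3/2})$, Young's inequality closes the estimate provided $\beta_0$ is chosen small relative to $\sqrt{\alpha_0\gamma_0}$ — the same smallness condition that ensures $\Phi_k$ is equivalent to the sum of its individual squared-norm blocks. Balancing the magnitude of the $\beta_0$-block dissipation $\tfrac{4\beta_0}{|k|}\|y\partial_x\omega_k\|^2$ against $\nu^{1/2}|k|^{1/2}\Phi_k$ then selects the enhanced rate $\lambda_{\nu,k} \sim \nu^{1/2}|k|^{1/2}$ as the geometric mean of the dissipative and mixing scales, yielding \eqref{eq:diffineq_final}; the exponential bound on $\Phi_k$ follows immediately by Grönwall after discarding the non-negative dissipative remainders. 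The main obstacle I anticipate is not any single integration by parts but the bookkeeping in this delicate balance of constants: each wrong-sign term produced by the $\beta_0$ block must be absorbed by dissipative contributions drawn simultaneously from both the $\alpha_0$ and $\gamma_0$ blocks, using universal constants compatible with the claimed $\eps_0 \geq 1/20$, and it is precisely this matching that forces the specific scaling of the three weights.
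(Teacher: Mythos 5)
Your overall architecture is the same as the paper's: the same four blocks with the same $(\nu,k)$-scaling of the weights, the same commutator mechanism generating $\l y\de_x\omega_k,\de_y\omega_k\r$ from the $H^1$ balance, and the same closing strategy via Young's inequality under a smallness condition of the form $\beta_0^2\leq\tfrac{1}{16}\alpha_0\gamma_0$. The $L^2$ and $H^1$ balances and the transport part of the cross term are correct. But the two places where the argument is genuinely delicate are misstated. First, the stream-function contribution to $\ddt\l y\de_x\omega_k,\de_y\omega_k\r$, namely $2\l y\de_{xx}\psi_k,\de_y\omega_k\r+2\l y\de_x\omega_k,\de_{xy}\psi_k\r$, combines (writing $\omega_k=\Delta\psi_k$ and commuting $\Delta$ with multiplication by $y$) into the clean \emph{favourable} term $-4\|\de_{xy}\psi_k\|^2$; there is no unfavourable $+2\|\nabla\de_x\psi_k\|^2$ and no residual $4\l y\de_y\omega_k,\de_{xx}\psi_k\r$. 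The sign matters: this coercive term, together with $\|y\de_x\omega_k\|^2$ and the inequality $\|\de_{xx}\psi_k\|^2\leq\|y\de_x\omega_k\|^2+2\|\de_{xy}\psi_k\|^2$, is what reconstructs the $\gamma$-part $\|\nabla\de_x\psi_k\|^2$ of $\Phi_k$ on the left of \eqref{eq:diffineq_final}; with your version the scheme does not close, because the $\gamma$ block supplies no dissipation of $\|\nabla\de_x\psi_k\|^2$ with which to absorb an unfavourable term carrying the weight $\beta$. Second, the $\gamma$ block is not "the same recipe": $\tfrac12\ddt\|y\de_x\omega_k\|^2$ produces the \emph{growing} term $+\nu\|\de_x\omega_k\|^2$ and the indefinite term $-4\l y\de_{xy}\psi_k,\de_{xx}\psi_k\r$, which carries no factor of $\nu$ and enters with the large weight $\gamma\sim\nu^{-1/2}$, so it cannot be absorbed by any available dissipation and certainly does not "pair with the $\beta_0$ block" (one has $\gamma\gg\beta$). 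It must cancel \emph{exactly} against $+2\cdot2\l y\de_{xy}\psi_k,\de_{xx}\psi_k\r$ coming from $\tfrac12\ddt\|\nabla\de_x\psi_k\|^2$; the factor $2$ in front of $\|\nabla\de_x\psi_k\|^2$ in \eqref{eq:def_Phi_k} exists precisely for this cancellation (see \eqref{eq:mon} and Remark \ref{rem:mon}), and your sketch does not identify it.

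There is a third, softer gap: the step that actually produces $2\eps_0\nu^{1/2}|k|^{1/2}\Phi_k$ on the left-hand side. On $\T\times\R$ there is no Poincar\'e inequality, so no dissipative term controls the leading block $\tfrac12\|\omega_k\|^2$ directly. The paper recovers it from the identity $\l y\omega_k,\de_y\omega_k\r=-\tfrac12\|\omega_k\|^2$, which gives $\|\omega_k\|^2\leq\sigma\|\de_y\omega_k\|^2+\sigma^{-1}\|y\omega_k\|^2$ with $\sigma\sim\nu^{1/2}|k|^{-1/2}$, splitting the cost between the viscous dissipation $\nu\|\de_y\omega_k\|^2$ and the $\beta$-block coercivity $\beta\|y\de_x\omega_k\|^2=\beta k^2\|y\omega_k\|^2$. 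Your "geometric mean of the dissipative and mixing scales" is the correct heuristic, but without this inequality (or an equivalent) the rate $\nu^{1/2}|k|^{1/2}$ in \eqref{eq:diffineq_final} cannot be extracted from the balances you have listed.
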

The above decay estimate is obtained via a hypocoercivity argument that requires a weighted $H^1$-norm of the initial data to be finite.
However, as a direct consequence we obtain the following result.
\begin{corollary}\label{cor:fast_decay}
 With the same notation as in Theorem \ref{thm:phi_k}, let $C_0:=(3\e(1+2\alpha_0\eps_0))^{1/2}$, and assume that $0<\nu\leq 1$. Then
 \begin{equation}
  Q_k(t):=\frac{1}{2}\norm{\omega_k(t)}^2 +\frac{\gamma_0}{4}\left[\norm{y \omega_k(t)}^2 + 2\norm{\nabla \psi_k(t)}^2\right]
 \end{equation}
decays at the fast rate
\begin{equation}
 Q_k(t)\leq C_0^2 Q_k(0)\e^{-2\eps_0\lambda_{\nu,k} t}, \quad t\geq 0.
\end{equation}
where
\begin{align}
\lambda_{\nu,k}=\frac{\nu^{1/2}|k|^{1/2}}{1+|\log \nu|+\log |k|}.
\end{align}
\end{corollary}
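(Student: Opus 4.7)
I would upgrade the decay of the hypocoercivity functional $\Phi_k$ from Theorem~\ref{thm:phi_k}, which requires $H^1$-type regularity of the initial data, into a semigroup decay for the coarser quantity $Q_k$ that only involves the $X$-norm of the data, at the cost of a logarithmic slowdown of the rate from $\sigma:=\nu^{1/2}|k|^{1/2}$ to $\lambda_{\nu,k}$. The key structural input would be a \emph{monotonicity} property of $Q_k$ along the linear flow. Computing $\ddt Q_k$ term by term, the evolution of $\|y\omega_k\|^2$ produces an indefinite contribution $4\int y^2 \de_x\psi_k\,\omega_k$ (once the transport term $y^2\de_x\omega_k$ integrates to zero in $x$), while the identity $\|\nabla\psi_k\|^2=-\langle\psi_k,\omega_k\rangle$ combined with \eqref{eq:poison} gives $\ddt\|\nabla\psi_k\|^2 = -2\int y^2\de_x\psi_k\,\omega_k - 2\nu\|\omega_k\|^2$. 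The weights $\gamma_0/4$ and $\gamma_0/2$ built into $Q_k$ then force an exact cancellation of these cross terms, producing
\[
 \ddt Q_k \;=\; -\nu\|\nabla\omega_k\|^2 - \tfrac{\gamma_0\nu}{2}\|y\nabla\omega_k\|^2 - \tfrac{\gamma_0\nu}{2}\|\omega_k\|^2 \;\leq\; 0.
\]
In particular $Q_k(t)\leq Q_k(0)$ and the dissipation budget $\int_0^t\nu\|\nabla\omega_k(s)\|^2\,ds \leq Q_k(0)$ hold for all $t\geq 0$.

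Next, using $\|y\de_x\omega_k\|=|k|\|y\omega_k\|$ and $\|\nabla\de_x\psi_k\|=|k|\|\nabla\psi_k\|$ together with Young's inequality on the $\beta_0$-cross term, $\Phi_k$ is coercive with respect to the norm $\|\omega_k\|^2+\tfrac{\nu^{1/2}}{|k|^{1/2}}\|\nabla\omega_k\|^2+\tfrac{|k|^{1/2}}{\nu^{1/2}}(\|y\omega_k\|^2+\|\nabla\psi_k\|^2)$, yielding the two-sided comparison
\[
 Q_k \;\lesssim\; \Phi_k \;\lesssim\; \tfrac{|k|^{1/2}}{\nu^{1/2}}\,Q_k \;+\; \tfrac{\nu^{1/2}}{|k|^{1/2}}\|\nabla\omega_k\|^2,
\]
with implicit constants depending only on $\alpha_0,\beta_0,\gamma_0$. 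Applying the dissipation budget on $[0,t_*]$ with $t_*:=1/\sigma$, there exists $s^*\in[0,t_*]$ with $\|\nabla\omega_k(s^*)\|^2\leq Q_k(0)/(\nu t_*)$; combined with $Q_k(s^*)\leq Q_k(0)$ this gives $\Phi_k(s^*) \lesssim \tfrac{|k|^{1/2}}{\nu^{1/2}}Q_k(0)$. Using $\sigma s^*\leq 1$ and invoking Theorem~\ref{thm:phi_k} from time $s^*$ then produces
\[
 Q_k(t) \;\leq\; \Phi_k(t) \;\lesssim\; \tfrac{|k|^{1/2}}{\nu^{1/2}}\,Q_k(0)\,e^{-2\eps_0\sigma t}\qquad\text{for all }t\geq s^*.
\]

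To convert this into the stated rate $\lambda_{\nu,k}$, observe that $\tfrac{1}{2}\log(|k|/\nu) \leq \tfrac{1}{2}(1+|\log\nu|+\log|k|) = \sigma/(2\lambda_{\nu,k})$, so for $t\geq t_{\mathrm{thr}}:=1/(4\eps_0\lambda_{\nu,k})$ the algebraic pre-factor $|k|^{1/2}/\nu^{1/2}$ is absorbed by the exponential $e^{-2\eps_0(\sigma-\lambda_{\nu,k})t}$, giving $Q_k(t)\leq C_0^2\,Q_k(0)\,e^{-2\eps_0\lambda_{\nu,k}t}$ on this range. On the complementary window $t\in[0,t_{\mathrm{thr}}]$, the monotonicity $Q_k(t)\leq Q_k(0)$ fits under the same envelope precisely when $C_0^2\geq e^{1/2}$; the stated choice $C_0^2=3e(1+2\alpha_0\eps_0)$ easily accommodates this overlap condition together with the numerical constants accumulated in the comparison step above. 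The critical subtlety of the argument is the exact cancellation yielding the monotonicity of $Q_k$: without it, the short-time window $[0,1/\lambda_{\nu,k}]$, which grows like $|\log\nu|\,\nu^{-1/2}$ as $\nu\to 0$ for $|k|=1$, could not be bridged with a universal constant, forcing either a strictly worse logarithmic correction or a $\nu$-dependent $C_0$.
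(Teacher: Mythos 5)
Your proposal is correct and follows essentially the same route as the paper: monotonicity of $Q_k$ (which the paper records as \eqref{eq:mon} in Remark \ref{rem:mon}) covers the initial window, a mean-value/dissipation-budget argument produces an intermediate time at which $\Phi_k\lesssim \nu^{-1/2}|k|^{1/2}Q_k(0)$, and the exponential decay of $\Phi_k$ from Theorem \ref{thm:phi_k} combined with absorbing the algebraic prefactor into the logarithmically corrected rate $\lambda_{\nu,k}$ handles large times. The only differences are cosmetic (you derive the monotonicity directly via $\norm{\nabla\psi_k}^2=-\ip{\psi_k,\omega_k}$ rather than by combining \eqref{eq:gamma-weight} and \eqref{eq:gamma-psi}, and your numerical thresholds differ slightly from the paper's $T_{\nu,k}$ and $t^*$).
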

Since $\norm{\nabla \psi_k(t)}\leq \norm{\omega_k(t)}$ for any $k\neq 0$, Corollary \ref{cor:fast_decay} implies Theorem \ref{thm:linenhanced}. 
Its proof is given in Section \ref{sub:semi}.

\begin{remark}
 Theorem \ref{thm:phi_k} has a corresponding version in case of partial dissipation $\nu\de_{yy}\omega$ (rather than the full Laplacian $\nu\Delta\omega$) on the right hand side of \eqref{eq:poison}. However, this does not translate to faster decay in $L^2$ as in Corollary \ref{cor:fast_decay}. Indeed, a key ingredient for its proof in the case of full Laplacian dissipation is the monotonicity of $\norm{y\de_x\omega}^2+2\norm{\nabla\de_x\psi}^2$, see also Remark \ref{rem:mon}. This, however, is no longer true for partial dissipation.
\end{remark}

\subsection{Preliminary energy estimates}\label{ssec:prelim_enest}
We start the discussion with some energy estimates that will be used to build the functionals $\Phi_k$. 

\begin{lemma}\label{lem:en_est}
 Let $\omega$ solve \eqref{eq:poison}. Then we have the following balances:
 \begin{align}
  \frac12\ddt \|\omega\|^2 +\nu\|\nabla\omega\|^2&=0,\label{eq:L2balance} \\
  \frac12\ddt \|\nabla\omega\|^2 +\nu\|\Delta\omega\|^2+2\l y\de_x \omega,\de_y\omega\r&=0,\label{eq:H1balance} \\
  \ddt\l y\de_x \omega,\de_y\omega\r+2\| y\de_x \omega\|^2+4\|\de_{xy}\psi\|^2 &=-2\nu\l \Delta \omega, y\de_{xy}\omega\r,\label{eq:crossterm}\\
  \frac12\ddt\| y\de_x \omega\|^2+\nu\|y \de_{x}\nabla\omega\|^2&=\nu\|\de_x\omega\|^2-4\l y\de_{xy} \psi, \de_{xx} \psi\r,\label{eq:gamma-weight}\\
  \frac12\ddt\| \nabla \de_x\psi\|^2 +\nu\|\de_x\omega\|^2&=2\l y\de_{xy} \psi,\de_{xx}\psi\r\label{eq:gamma-psi}.
 \end{align}
\end{lemma}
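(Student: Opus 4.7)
All five identities follow by testing the linear equation \eqref{eq:poison} against a suitable multiplier and integrating by parts. The three tools I use throughout are: antisymmetry of the transport $y^2\de_x$ in $x$ (which annihilates any integrand that is an $x$-derivative of a periodic quantity), the substitution $\omega=\Delta\psi$ to trade vorticity for stream-function derivatives, and the commutator $[\de_y, y^2]=2y$.

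For \eqref{eq:L2balance}, I would test against $\omega$: the transport integrand is a pure $x$-derivative, while the pressure piece $-2\int\de_x\psi\cdot\Delta\psi$ also integrates to zero by $x$-periodicity after one integration by parts. The identity \eqref{eq:H1balance} comes from testing against $-\Delta\omega$; after splitting $\Delta=\de_{xx}+\de_{yy}$, the $\de_{xx}$ part of the transport is again a total $x$-derivative, while the $\de_{yy}$ part requires one $y$-integration by parts, and the weight $y^2$ then produces via $[\de_y, y^2]=2y$ exactly the cross term $2\l y\de_x\omega, \de_y\omega\r$. The pressure contribution becomes $2\int\de_x\psi\cdot\Delta^2\psi = 2\int\de_x\omega\cdot\omega = 0$.

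The heart of the lemma is \eqref{eq:crossterm}. Differentiating $\l y\de_x\omega, \de_y\omega\r$ and substituting the evolution equations for $\de_x\omega$ and $\de_y\omega$, I would group the resulting eight terms by source. The two cubic pieces combine as $-\int y^3\de_x(\de_x\omega\,\de_y\omega)$ and vanish by $x$-periodicity, while the commutator piece from $\de_y(y^2\de_x\omega)$ produces exactly $-2\|y\de_x\omega\|^2$. The pressure contribution $2\int y\de_{xx}\psi\,\de_y\omega + 2\int y\de_x\omega\,\de_{xy}\psi$, after substituting $\omega=\Delta\psi$, expands into four monomials in derivatives of $\psi$; repeated integrations by parts in $x$ and $y$ (several of which reduce to the pattern $\int y\,\de_y(\,\cdot\,)^2 = -\|\cdot\|^2$) collapse the combination to precisely $-4\|\de_{xy}\psi\|^2$. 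The viscous terms $\nu\int y\Delta\de_x\omega\,\de_y\omega + \nu\int y\de_x\omega\,\Delta\de_y\omega$ assemble, modulo exact $\de_x$- and $\de_y$-derivatives that integrate to zero, into $-2\nu\int y\de_{xy}\omega\,(\de_{xx}\omega+\de_{yy}\omega) = -2\nu\l\Delta\omega, y\de_{xy}\omega\r$. This pressure bookkeeping is the only delicate step in the proof.

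The identity \eqref{eq:gamma-weight} is obtained by testing against $y^2\de_x\omega$: the transport vanishes by $x$-periodicity, the pressure $2\int y^2\de_x\omega\,\de_{xx}\psi$ collapses to $-4\l y\de_{xy}\psi, \de_{xx}\psi\r$ after writing $\de_x\omega=\de_x\Delta\psi$ and performing one $y$-integration by parts, and the viscous term becomes $-\nu\|y\de_x\nabla\omega\|^2 + \nu\|\de_x\omega\|^2$, the extra $\nu\|\de_x\omega\|^2$ arising from the commutator of $\de_y$ against the weight $y^2$. Finally, \eqref{eq:gamma-psi} is slightly different since one cannot directly evolve $\psi$; I would instead note $\|\nabla\de_x\psi\|^2 = -\int\de_x\psi\,\de_x\omega$ and use $\Delta\de_t\psi=\de_t\omega$ together with two integrations by parts to rewrite $\tfrac12\ddt\|\nabla\de_x\psi\|^2 = \int\de_t\omega\cdot\de_{xx}\psi$. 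Substituting the equation, the self-interaction $2\int\de_x\psi\,\de_{xx}\psi$ vanishes as a total $x$-derivative, the viscous term reads $\nu\int\Delta\omega\,\de_{xx}\psi = \nu\int\omega\,\de_{xx}\omega = -\nu\|\de_x\omega\|^2$, and the transport $-\int y^2\de_x\omega\,\de_{xx}\psi$, after writing $\de_x\omega=\Delta\de_x\psi$ and one $y$-integration by parts, yields $2\l y\de_{xy}\psi, \de_{xx}\psi\r$.
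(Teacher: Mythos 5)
Your proposal is correct and follows essentially the same route as the paper: each identity is obtained by testing \eqref{eq:poison} with the corresponding multiplier ($\omega$, $\Delta\omega$, the cross term, $y^2\de_x\omega$, $\de_{xx}\psi$) and integrating by parts, using the antisymmetry $\ip{y^n\de_x f,f}=0$ and the substitution $\omega=\Delta\psi$. All the intermediate cancellations you describe (the vanishing of the cubic transport pieces, the commutator terms producing $-2\|y\de_x\omega\|^2$ and $\nu\|\de_x\omega\|^2$, and the collapse of the pressure contribution in \eqref{eq:crossterm} to $-4\|\de_{xy}\psi\|^2$) check out and match the paper's computation.
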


\begin{remark}\label{rem:mon}
 In particular, combining \eqref{eq:gamma-weight} and \eqref{eq:gamma-psi} we have the useful identity 
 \begin{equation}\label{eq:mon}
  \frac{1}{2}\ddt \left[\norm{y\de_x \omega}^2+2\norm{\nabla \de_x\psi}^2\right]=-\nu\norm{\de_x\omega}^2-\nu\norm{y \de_{x}\nabla\omega}^2.
 \end{equation}
 This will be used in the proof of Corollary \ref{cor:fast_decay} and also motivates the structure of the $\gamma$ term in our definition of $\Phi_k$.
 The cancellation obtained in this linear combination is in fact a crucial point in our argument.
\end{remark}

\begin{proof}
All estimates follow by direct computation, using integration by parts and the antisymmetry property $\ip{y^n\de_x f,f}=0$, for $n\in\N_0$.
The $L^2$ balance \eqref{eq:L2balance} follows directly by testing \eqref{eq:poison} with $\omega$:
\begin{align}
 \frac12\ddt \|\omega\|^2 +\nu\|\nabla\omega\|^2=0.
\end{align}
 Testing  \eqref{eq:poison} with $\Delta\omega$ we also obtain \eqref{eq:H1balance} by a simple integration by parts as
\begin{align}
 \frac12\ddt \|\nabla\omega\|^2 +\nu\|\Delta\omega\|^2=-2\l y\de_x \omega,\de_y\omega\r.
\end{align}
Turning to \eqref{eq:crossterm}, we use \eqref{eq:poison} to compute
\begin{align}
\ddt\l y\de_x \omega,\de_y\omega\r
&=\nu\left[\l y \de_{x}\Delta \omega,\de_y\omega\r+\l y\de_x \omega,\de_{y}\Delta \omega\r\right]-\left[\l y^3\de_{xx} \omega ,\de_y\omega\r+\l y\de_x \omega,\de_y(y^2\de_x \omega )\r\right]\notag\\
&\quad+2\left[\l y\de_{xx}\psi,\de_y\omega\r+\l y\de_x \omega,\de_{xy}\psi\r\right].
\end{align}
We treat the $\nu$ term integrating by parts as
\begin{equation}
\l y \de_{x}\Delta \omega,\de_y\omega\r+\l y\de_x \omega,\de_{y}\Delta \omega\r=-2\l \Delta \omega, y\de_{xy}\omega\r,
\end{equation}
while for the second term we compute
\begin{equation}
\l y^3\de_{xx} \omega ,\de_y\omega\r+\l y\de_x \omega,\de_y(y^2\de_x \omega )\r=2\| y\de_x \omega\|^2.
\end{equation}
Lastly, the third term yields
\begin{equation}
 \l y\de_{xx}\psi,\de_y\Delta\psi\r+\l y\de_x \Delta\psi,\de_{xy}\psi\r=-\l \Delta(y\de_{x}\psi),\de_{xy}\psi\r+\l y\de_x \Delta\psi,\de_{xy}\psi\r=-2\| \de_{xy}\psi\|^2,
\end{equation}
and \eqref{eq:crossterm} follows. For \eqref{eq:gamma-weight}, 
 we have
\begin{align}
\frac12\ddt\| y\de_x \omega\|^2
&=\l y\de_x \omega, y\de_x (\nu\Delta \omega-y^2\de_x \omega +2\de_x\psi)\r\notag\\
&=\nu\l y\de_x \omega, y \de_{x} \Delta \omega\r + 2\ip{y\de_x\Delta\psi,y\de_{xx}\psi}\notag\\
&=\nu\norm{\de_x\omega}^2-\nu\norm{y\de_x\nabla\omega}^2-4\ip{y\de_{xy}\psi,\de_{xx}\psi}\label{eq:boh},
 \end{align}
 while for \eqref{eq:gamma-psi} we multiply \eqref{eq:poison} by $\de_{xx}\psi$ and obtain
\begin{align}
\l\de_t\omega,\de_{xx}\psi\r +\l y^2\de_x \omega,\de_{xx}\psi\r -2\l\de_x\psi,\de_{xx}\psi\r=\nu\l\Delta\omega,\de_{xx}\psi\r.
\end{align}
Hence
\begin{align}
\frac12\ddt\| \nabla \de_{x}\psi\|^2 +\l y^2\de_x \Delta \psi,\de_{xx}\psi\r =-\nu \norm{\de_x\omega}^2,
\end{align}
and the conclusion follows from
\begin{align}
 \l y^2\de_x \Delta \psi,\de_{xx}\psi\r=-2\l y\de_{xy} \psi,\de_{xx}\psi\r-\l y^2\de_x \nabla \psi,\nabla\de_{xx}\psi\r=-2\l y \de_{xy} \psi,\de_{xx}\psi\r.
\end{align}
The proof is over.
\end{proof}

\subsection{The hypocoercivity setting}\label{ssec:hypoco}
In order to simplify notation, 
we will consider the solution $\omega$ to \eqref{eq:poison} as concentrated on a single $x$-frequency band $\pm k$ ($k\in\N$). We will therefore omit the subscript $k$ in all the quantities, and we will only keep the dependence on $k$ of the various constants. The first step is to define the functional in \eqref{eq:def_Phi_k}.
For $\alpha,\beta, \gamma>0$ to be determined, we let 
\begin{align}
\Phi=\frac12\left[\|\omega\|^2 + \alpha \|\nabla\omega\|^2+4\beta \l y\de_x \omega,\de_y\omega\r +\gamma\| y\de_x \omega\|^2+
2\gamma \| \nabla \de_x\psi\|^2 \right].
\end{align}
Notice that, up to rescaling of the various coefficients, $\Phi$ has exactly the form \eqref{eq:def_Phi_k}, as long as we assume that 
$\omega$ is concentrated in one single frequency band $\pm k$.
We will now derive various properties of $\Phi$ and prove \eqref{eq:diffineq_final}.

\begin{proof}[Proof of Theorem \ref{thm:phi_k}]
The proof proceeds through a series of constraints on $\alpha,\beta,\gamma$ that will be stated and verified at the end of the proof. We first observe that we can guarantee that $\Phi\geq 0$. Clearly,
\begin{align}\label{eq:funct}
4\beta \l y\de_x \omega,\de_y\omega\r\leq 4\beta \| y\de_x \omega\|\|\de_y\omega\| \leq \frac{\alpha}{2}\|\nabla\omega\|^2
+ \frac{8\beta^2}{\alpha}\| y\de_x \omega\|^2
\end{align}
so that if we assume that
\begin{align}\label{eq:constraint}
\frac{\beta^2}{\alpha\gamma}\leq \frac{1}{16}
\end{align}
we obtain the upper and lower bounds
\begin{equation}\label{eq:PhiPOS}
\begin{aligned}
&\frac14\left[2\|\omega\|^2 + \alpha \|\nabla\omega\|^2 +\gamma\| y\de_x \omega\|^2+
4\gamma \| \nabla \de_x\psi\|^2 \right]\leq \Phi,\\
&\Phi \leq \frac14\left[2\|\omega\|^2 + 3\alpha \|\nabla\omega\|^2+3\gamma\| y\de_x \omega\|^2+
4\gamma \| \nabla \de_x\psi\|^2 \right].
\end{aligned}
\end{equation}
By virtue of the energy estimates from Lemma \ref{lem:en_est}, $\Phi$ satisfies
\begin{equation}\label{eq:phi'}
\begin{aligned}
\ddt \Phi &+\nu\|\nabla\omega\|^2+\alpha\nu\|\Delta\omega\|^2+4\beta\| y\de_x \omega\|^2+8\beta\| \de_{xy}\psi\|^2+\gamma\nu\|y \de_{x}\nabla\omega\|^2 +\gamma\nu\|\de_x\omega\|^2\\
&=-4\beta\nu\l \Delta \omega, y\de_{xy}\omega\r-2\alpha\l y\de_x \omega,\de_y\omega\r.
\end{aligned}
\end{equation}
Now, to absorb the first term on the right hand side we make use of \eqref{eq:constraint} and note that
\begin{equation}
\begin{aligned}
-4\beta\nu\l \Delta \omega, y\de_{xy}\omega\r &\leq 4\beta\nu\|\Delta \omega \|\|y\de_{xy}\omega\| \leq \frac{\alpha \nu}{2}\|\Delta \omega \|^2  +2\frac{\beta^2\nu}{\alpha}\|y\de_{xy}\omega\|^2\\
&  \leq \frac{\alpha \nu}{2}\|\Delta \omega \|^2+\frac{\gamma \nu}{8}\|y\de_{x}\nabla\omega\|^2,
\end{aligned}
\end{equation}
while the second term can be estimated as
\begin{align}
-2\alpha\l y\de_x \omega,\de_y\omega\r\leq \frac{\nu}{4} \|\nabla \omega\|^2+ 4\frac{\alpha^2}{\nu} \|y\de_x \omega\|^2.
\end{align}
In view of our aim to reconstruct the functional $\Phi$ on the left hand side of \eqref{eq:phi'}, we observe that the missing term $\norm{\de_{xx}\psi}^2$ may be bounded from above as follows: since
\begin{equation}
 \l \de_{yx}\psi, y\de_x\omega \r=\l \de_{yx}\psi, y\de_{yyx}\psi \r+\l \de_{yx}\psi, y\de_{xxx}\psi \r=-\frac12\|\de_{yx}\psi\|^2+\frac12\|\de_{xx}\psi\|^2
\end{equation}
it follows that
\begin{align}
\|\de_{xx}\psi\|^2-\|\de_{xy}\psi\|^2=2\l\de_{xy}\psi, y\de_{x}\omega \r\leq \|\de_{xy}\psi\|^2+\|y\de_{x}\omega\|^2,
\end{align}
implying
\begin{align}
\|\de_{xx}\psi\|^2\leq\|y\de_x\omega\|^2+2\|\de_{xy}\psi\|^2.
\end{align}
Altogether, this means that we can reduce \eqref{eq:phi'} to
\begin{equation}\label{eq:diffineq1}
\begin{aligned}
\ddt \Phi &+\frac{3\nu}{4}\|\nabla\omega\|^2+\frac{\alpha\nu}{2}\|\Delta\omega\|^2+\left(2\beta-\frac{4\alpha^2}{\nu}\right)\| y\de_x \omega\|^2\\
&+2\beta\| \nabla\de_{x}\psi\|^2+\frac{7}{8}\gamma\nu\|y\de_{x}\nabla\omega\|^2+ \gamma\nu\|\de_x \omega \|^2 \leq  0.
\end{aligned}
\end{equation}
We now specialize the discussion to $\omega$ that is localized on a fixed $x$-frequency band $\pm k$, for some $k\in\N$. Then $\norm{\partial_x\omega}=\abs{k}\norm{\omega}$. We will next choose parameters $\alpha,\beta,\gamma$ in dependence of $\nu$ and $k$ in such a way that a suitable differential inequality will hold. The details are as follows.

We fix the scales of the parameters $\alpha,\beta,\gamma$ with respect to $\nu$ and $k$ to be
\begin{align}\label{eq:coeffchoice}
\alpha=\frac{\nu^{1/2}}{\abs{k}^{1/2}}\alpha_0,\qquad \beta=\frac{1}{\abs{k}}\beta_0,\qquad \gamma=\frac{1}{\nu^{1/2}\abs{k}^{3/2}}\gamma_0,
\end{align}
with $\alpha_0,\beta_0,\gamma_0>0$ independent of $\nu$ and $k$ such that
\begin{align}\label{eq:constraint1}
\frac{\beta_0^2}{\alpha_0\gamma_0}\leq \frac{1}{16},
\end{align}
so that \eqref{eq:constraint1} is satisfied.
In order to reconstruct the $L^2$ norm of $\omega$ on the left hand side of \eqref{eq:diffineq1}, we preliminarily note that an integration by parts yields
\begin{equation}\label{eq:ident}
 \l y \omega, \de_y\omega\r= -\frac12  \|\omega\|^2.
\end{equation}
As a consequence, for every $\sigma>0$ it holds that
\begin{equation}\label{eq:spec-gap}  
\norm{\omega}^2\leq 2 \norm{\de_y\omega}\norm{y \omega}\leq \sigma \norm{\de_y\omega}^2 +\frac{1}{\sigma}\norm{y \omega}^2.
\end{equation}
Taking $\sigma=\frac{1}{2\beta_0^{1/2}}\frac{\nu^{1/2}}{\abs{k}^{1/2}}$, we conclude that
\begin{equation}
 \frac{\beta_0^{1/2}}{2}\nu^{1/2}\abs{k}^{1/2}\norm{\omega}^2\leq \frac{\nu}{4}\norm{\de_y\omega}^2+\beta\norm{y\de_x\omega}^2.
\end{equation}
Upon substituting the relations \eqref{eq:coeffchoice} and assuming the further constraint
\begin{align}\label{eq:constraint2}
\beta_0\geq 4\alpha_0^2,
\end{align}
we thus obtain from \eqref{eq:diffineq1} the inequality
\begin{equation}\label{eq:diffineq4}
 \begin{aligned}
  \ddt \Phi &+\nu^{1/2}\abs{k}^{1/2}\Bigg[ \frac{\beta_0^{1/2}}{2}\norm{\omega}^2
   +\frac{1}{2}\frac{\nu^{1/2}}{\abs{k}^{1/2}}\|\nabla\omega\|^2+\frac{\beta_0-4\alpha_0^2}{\nu^{1/2}\abs{k}^{3/2}}\left(\| y\de_x \omega\|^2 +2\| \nabla\de_{x}\psi\|^2\right)\Bigg] \\
   &+\frac{\alpha\nu}{2}\|\Delta\omega\|^2+\frac{7}{8}\gamma\nu \|y \de_{x}\nabla\omega\|^2+\gamma\nu\|\de_x \omega \|^2\leq 0.
 \end{aligned}
\end{equation}
Now it remains to choose $\alpha_0,\beta_0,\gamma_0>0$ satisfying the constraints \eqref{eq:constraint1} and \eqref{eq:constraint2} and such that the above term in square brackets bounds a multiple of $\Phi$. Factoring out $\beta_0^{1/2}/4$, 
 \begin{align}\label{eq:diffineq4bis}
  \ddt \Phi &+ \frac{\beta_0^{1/2}}{4}\nu^{1/2}\abs{k}^{1/2}\left[2\norm{\omega}^2
   +\frac{2}{\beta_0^{1/2}}\frac{\nu^{1/2}}{\abs{k}^{1/2}}\|\nabla\omega\|^2+\frac{\beta_0-4\alpha_0^2}{\beta_0^{1/2}\nu^{1/2}\abs{k}^{3/2}}\left(3\| y\de_x \omega\|^2 +4\| \nabla\de_{x}\psi\|^2\right)\right]\notag \\
   &+\frac{\alpha\nu}{2}\|\Delta\omega\|^2+\frac{7}{8}\gamma\nu \|y \de_{x}\nabla\omega\|^2+\gamma\nu\|\de_x \omega \|^2\leq 0,
 \end{align}
and invoking \eqref{eq:PhiPOS}, the additional conditions for this read
\begin{equation}\label{eq:constraint3}
 \frac{2}{\beta_0^{1/2}}\geq 3\alpha_0,\qquad \frac{\beta_0-4\alpha_0^2}{\beta_0^{1/2}}\geq \gamma_0.
\end{equation}
It is not hard to check that the choice
\begin{align}
\delta_0^4=\frac{1}{512}, \qquad \alpha_0=4\delta_0^3, \qquad \beta_0=\frac{\delta_0^2}{4}, \qquad \gamma_0=\frac{\delta_0}{4},
\end{align}
satisfies  \eqref{eq:constraint1} with an equality and  \eqref{eq:constraint3}, and hence also \eqref{eq:constraint2} automatically.
With \eqref{eq:diffineq4bis}  this yields
\begin{equation}\label{eq:diffineq5}
  \ddt \Phi +2\eps_0\nu^{1/2}\abs{k}^{1/2}\Phi+\frac{\alpha\nu}{2}\|\Delta\omega\|^2+\frac{7}{8}\gamma \nu\|y \de_{x}\nabla\omega\|^2+\gamma\nu\|\de_x \omega \|^2\leq 0,
\end{equation}
where $\eps_0=\delta_0/4\geq\frac{1}{20}$. This concludes the proof of Theorem \ref{thm:phi_k}.
\end{proof}

\subsection{Semigroup estimates and the proof of Theorem \ref{thm:linenhanced}}\label{sub:semi}
As mentioned above, Theorem \ref{thm:linenhanced} is a direct  consequence of Corollary \ref{cor:fast_decay}. We therefore prove the latter
here below.
\begin{proof}[Proof of Corollary \ref{cor:fast_decay}]
 We use the notation from Theorem \ref{thm:phi_k}. We begin by recalling that per \eqref{eq:PhiPOS}, for $\omega$ localized to $x$-frequency $\pm k$, the quantity
 \begin{equation}
  \frac{1}{2}\norm{\omega}^2 + \frac{\alpha_0\nu^{1/2}}{4\abs{k}^{1/2}}\norm{\nabla\omega}^2 +\frac{\gamma_0\abs{k}^{1/2}}{4\nu^{1/2}}\left[\norm{y \omega}^2 + 4\norm{\nabla \psi}^2\right]
 \end{equation}
 is comparable to $\Phi$. In particular, since $\abs{k}\geq 1$ and also 
 $0<\nu<1$, we have
 \begin{equation}\label{eq:Q_k_bd}
  Q(t):=\frac{1}{2}\norm{\omega(t)}^2 +\frac{\gamma_0}{4}\left[\norm{y \omega(t)}^2 + 2\norm{\nabla \psi(t)}^2\right]\leq \Phi(t).
 \end{equation}
  On the one hand we note that by monotonicity of $\norm{\omega}^2$ and of $\norm{y\de_x\omega}^2+2\norm{\nabla\de_x\psi}^2$ (see \eqref{eq:L2balance} and \eqref{eq:mon} in Remark \ref{rem:mon}) we get
  \begin{equation}
   Q(t)\leq Q(0).
  \end{equation}
  In particular, this suffices to show the claim for $t<T_{\nu,k}:=\frac{1+\abs{\log\nu}+\log\abs{k}}{2\eps_0\nu^{1/2}\abs{k}^{1/2}}$.
  On the other hand, for $t\geq T_{\nu,k}$ we argue as follows: from the energy equality \eqref{eq:L2balance} and the mean value theorem we deduce that there exists 
  \begin{equation}
   t^*\in\left(0,\frac{1}{2\eps_0\nu^{1/2}\abs{k}^{1/2}}\right)
  \end{equation}
  such that
  \begin{equation}
   \frac{\nu^{1/2}}{\abs{k}^{1/2}}\norm{\nabla\omega(t^*)}^2\leq \eps_0\norm{\omega^{in}}^2.
  \end{equation}
  By \eqref{eq:PhiPOS} this implies
  \begin{equation}
   \Phi(t^*)\leq \frac{1}{2}\norm{\omega(t^*)}^2 + 3\alpha_0\eps_0\norm{\omega^{in}}^2 +\frac{\gamma_0\abs{k}^{1/2}}{4\nu^{1/2}}\left[3\norm{y \omega(t^*)}^2 + 4\norm{\nabla \psi(t^*)}^2\right].
  \end{equation}
  Invoking again the aforementioned monotonicity yields
  \begin{equation}
   \Phi(t^*)\leq (3+6\alpha_0\eps_0) \frac{\abs{k}^{1/2}}{\nu^{1/2}}Q(0).
  \end{equation}
  From the differential inequality \eqref{eq:diffineq_final} for $\Phi$ and the fact that $0<t^*<T_{\nu,k}$ it then follows that for $t\geq T_{\nu,k}$ we have
  \begin{equation}
  \begin{aligned}
   Q(t)&\leq \Phi(t)\leq \e^{-2\eps_0\nu^{1/2}\abs{k}^{1/2}(t-t^*)}\Phi(t^*)\leq (3+6\alpha_0\eps_0)\e^{2\eps_0\nu^{1/2}\abs{k}^{1/2}t^*} \frac{\abs{k}^{1/2}}{\nu^{1/2}}\e^{-2\eps_0\nu^{1/2}\abs{k}^{1/2}t}Q(0)\\
   &\leq \e (3+6\alpha_0\eps_0)  \e^{-2\eps_0\frac{\nu^{1/2}\abs{k}^{1/2}}{1+\abs{\log\nu}+\log\abs{k}}t} Q(0),
  \end{aligned}
  \end{equation}
  where we used in the last inequality that $0<t^*<\frac{1}{2\eps_0\nu^{1/2}\abs{k}^{1/2}}$ and that
  \begin{align}
   \frac{\abs{k}^{1/2}}{\nu^{1/2}}\e^{-2\eps_0\nu^{1/2}\abs{k}^{1/2}t}\leq \e^{-2\eps_0\frac{\nu^{1/2}\abs{k}^{1/2}}{1+\abs{\log\nu}+\log\abs{k}}t},\quad t\geq T_{\nu,k}.
  \end{align}
This concludes the proof.
\end{proof}

\section{Nonlinear transition stability threshold}\label{sec:nonlin}
Let us now return to the study of the full, nonlinear equation \eqref{eq:NSEpois}, rewritten here for convenience:
\begin{equation}\label{eq:fullPoison}
 \de_t\omega=\Lin\omega-u\cdot\nabla\omega,\quad u=\nabla^\perp \Delta^{-1}\omega.
\end{equation}
In this section we will see that there is a threshold for the Reynolds number, above which the \emph{nonlinear} flow globally exhibits the enhanced dissipation demonstrated for the linear flow in Section \ref{sec:LinEn}. Here the modes $k=0$ and $k\neq 0$ play different roles, so to give the precise result we introduce the notation $\PP$ for the projection onto the $x$-frequency $k=0$, i.e.
\begin{equation}
 \PP f (x,y)=\PP f(y)=\int_{\T} f(x,y)\dd x.
\end{equation}
We will furthermore denote the shear part of a function by $f_s(y):=\PP f(y)$ and $\tilde{f}(x,y):=\NP f(x,y)$.\footnote{Note also that since $x\in\T$ we have $\norm{f_s}_{L^2}=\sqrt{2\pi}\norm{f_s}_{L^2_y}$.} When we apply $\PP$ to the velocity $u$, then $u_s$ will only denote the scalar $\PP u^1$, since by periodicity
for the second component we have $\PP u^2=0$.
We also remind the reader of the notation \eqref{eq:normX} for the norm $X$: $\norm{f}_X^2=\norm{f}_{L^2}^2+\norm{yf}_{L^2}^2$. 

The main goal of the section is to prove the following theorem, which is nothing but a restatement of Theorem \ref{thm:threshold-main}. We remind the reader that $C_0,\eps_0$ are the constants of the semigroup estimate in Corollary \ref{cor:linenhanced}.
\begin{theorem}\label{thm:threshold}
 For every $\mu,C_1>0$, there exists $\nu_0>0$ such that if $0<\nu\leq\nu_0$ and 
 \begin{equation}
  \norm{\omega^{in}}_{X} + \norm{y \PP u^{1,in}}_{L^2}\leq C_1\nu^{3/4+2\mu},
 \end{equation}
 then there exists a unique global solution $\omega$ to the initial value problem for \eqref{eq:fullPoison} with $\omega(0)=\omega^{in}$. 
 Moreover, the modes with $k\neq 0$ exhibit an enhanced dissipation rate as in the linear case, namely
 \begin{equation}
  \norm{\NP\omega(t)}_{X}\leq 2C_0\e^{-\frac{\eps_0}{1+\log2C_0}\lambda_\nu t}\norm{\NP\omega^{in}}_{X},\qquad \forall t\geq 0,
 \end{equation}
 where $\NP=1-\PP$, while the $k=0$ mode remains bounded, i.e.\ for a universal constant $C_2>0$ we have
 \begin{equation}
  \norm{\PP\omega(t)}_{X}\leq C_2\norm{\omega^{in}}_{X},\qquad \forall t\geq 0.
 \end{equation}
\end{theorem}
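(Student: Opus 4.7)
My first step is to split $\omega=\omega_s+\tilde\omega$ with $\omega_s=\PP\omega$, $\tilde\omega=\NP\omega$, and correspondingly $u=u_s+\tilde u$, noting $\PP u^2=0$ by $x$-periodicity and incompressibility. Since $\Lin$ preserves the $\PP/\NP$ splitting and, using $\nabla\cdot\tilde u=0$, $\PP(u\cdot\nabla\omega)=\de_y\PP(\tilde u^2\tilde\omega)$, the vorticity equation \eqref{eq:fullPoison} decouples as
\begin{align}
\de_t\omega_s &= \nu\de_{yy}\omega_s-\de_y\PP(\tilde u^2\tilde\omega),\\
\de_t\tilde\omega &= \Lin\tilde\omega-u_s\de_x\tilde\omega-\tilde u^2\de_y\omega_s-\NP(\tilde u\cdot\nabla\tilde\omega).
\end{align}
Thus $\omega_s$ evolves by a heat equation driven quadratically by $\tilde\omega$, while $\tilde\omega$ enjoys the enhanced linear decay of Corollary \ref{cor:linenhanced} but is coupled back to the slow shear via the mixed term $\tilde u^2\de_y\omega_s$.

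\textbf{Bootstrap via Duhamel.} Assuming local well-posedness in $X$, I would define
\begin{align}
T^*=\sup\{T\geq 0 : \norm{\tilde\omega(t)}_X\leq 2C_0\e^{-\eps t}\norm{\tilde\omega^{in}}_X\ \text{and}\ \norm{\omega_s(t)}_X\leq C_2\norm{\omega^{in}}_X\ \text{on}\ [0,T]\},
\end{align}
with $\eps=\eps_0\lambda_\nu/(1+\log 2C_0)$, and aim to upgrade both bounds on $[0,T^*]$, forcing $T^*=\infty$. The slight reduction from $\eps_0\lambda_\nu$ to $\eps$ is deliberate: applying Corollary \ref{cor:linenhanced} to the Duhamel formula
\begin{align}
\tilde\omega(t)=\e^{\Lin t}\tilde\omega^{in}-\int_0^t\e^{\Lin(t-s)}\NP\!\left[u_s\de_x\tilde\omega+\tilde u^2\de_y\omega_s+\NP(\tilde u\cdot\nabla\tilde\omega)\right]\dd s
\end{align}
yields a linear piece of size $C_0\e^{-\eps_0\lambda_\nu t}\norm{\tilde\omega^{in}}_X$, and the logarithmic slack between $\eps$ and $\eps_0\lambda_\nu$ exactly accommodates the factor $2C_0$ while leaving room to absorb the nonlinear integral. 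The shear bound is obtained by a direct $X$-energy identity on the $\omega_s$ equation, whose only forcing $\de_y\PP(\tilde u^2\tilde\omega)$ is exponentially small in time under the bootstrap.

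\textbf{Nonlinear estimates.} The three forcing terms are estimated separately. The transport $u_s\de_x\tilde\omega$ is controlled through a 1D Agmon-type bound $\norm{u_s}_{L^\infty_y}\lesssim\norm{u_s}^{1/2}\norm{\de_y u_s}^{1/2}$ together with the shear bootstrap, contributing a factor of order $\nu^{3/4+2\mu}$. The self-interaction $\NP(\tilde u\cdot\nabla\tilde\omega)$ is treated by Ladyzhenskaya--Brezis--Gallouet inequalities and the exponential decay of $\tilde\omega$; the Duhamel integral then produces an effective prefactor $\lambda_\nu^{-1}(\nu^{3/4+2\mu})^2\sim\nu^{-1/2}|\log\nu|\cdot\nu^{3/2+4\mu}\ll\nu^{3/4+2\mu}$, and this balance between quadratic smallness and the $\lambda_\nu^{-1}$ loss is precisely what singles out the exponent $3/4$. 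The mixed term $\tilde u^2\de_y\omega_s$ is the delicate one, because $\omega_s$ does not decay on the enhanced timescale; I would exploit the Biot--Savart identity $\tilde u^2=\de_x\Delta^{-1}\tilde\omega$ so that each $\de_x$ brings in a factor $|k|\geq 1$ boosting the semigroup rate by $|k|^{1/2}$, and combine this with the fast decay of $\tilde\omega$ itself.

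\textbf{Main obstacle and conclusion.} The principal difficulty is the unbounded weight $y$ in $X$, which interacts badly both with the Poiseuille shear $y^2$ inside $\Lin$ and with $\de_y\omega_s$ in the mixed nonlinearity: naive H\"older splittings of $\norm{y\tilde u^2\de_y\omega_s}$ fail at infinity, since moving $y$ to either factor leads to uncontrolled growth. This is precisely why the hypothesis includes the auxiliary quantity $\norm{y\PP u^{1,\rm in}}_{L^2}$; I would propagate a weighted bound on $u_s$ itself via the evolution equation for $u_s=-\de_y^{-1}\omega_s$, which together with a Hardy-type inequality provides the missing control on $\norm{y\omega_s}_{L^2}$ and on weighted norms of $\tilde u^2$ via Biot--Savart. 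With every nonlinear contribution of size $o(\nu^{3/4+2\mu})$, choosing $\nu_0$ small improves $2C_0\rightsquigarrow C_0+o(1)$ and $C_2\rightsquigarrow 1+o(1)$ on $[0,T^*]$, closing the bootstrap and yielding Theorem \ref{thm:threshold}.
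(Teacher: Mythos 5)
Your strategy coincides with the paper's: the shear/non-shear splitting, Duhamel with the semigroup bound of Corollary \ref{cor:linenhanced}, a bootstrap at the reduced rate $\eps_0\lambda_\nu/(1+\log 2C_0)$, interpolation bounds on the three nonlinear contributions, and propagation of weighted bounds on $u_s$ from the extra hypothesis on $\norm{y\PP u^{1,in}}_{L^2}$ are all exactly the ingredients of the actual proof. There are, however, three points where your sketch diverges from what actually makes the argument close. (i) The global-in-time bootstrap on the pointwise decaying bound is harder to run than you suggest: every forcing term carries a full gradient $\norm{\nabla\omega}_{L^2}$, which is only square-integrable in time (via $\nu\int_0^\infty\norm{\nabla\omega}_{L^2}^2\leq\norm{\omega^{in}}_{L^2}^2$, with no pointwise decay), so the Duhamel integral does not directly return the factor $\e^{-\eps t}$. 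The paper instead iterates on intervals of length $\kappa_0^{-1}\lambda_\nu^{-1}$, on which the weight $\e^{\eps_0\lambda_\nu\tau}$ is bounded by a fixed constant, and obtains the decay from the gain of a factor $\e^{-1}$ per interval. (ii) The balance $\lambda_\nu^{-1}\eps^2\ll\eps$ you invoke only requires data of size $\ll\lambda_\nu\sim\nu^{1/2}$ and therefore does not single out the exponent $3/4$; the true constraint comes from $\sup_t\norm{u}_{L^\infty}\cdot T_\nu^{1/2}\bigl(\int_0^{T_\nu}\norm{\nabla\omega}_{L^2}^2\bigr)^{1/2}\sim\nu^{-3/4}\abs{\log\nu}^{1/2}\norm{\omega^{in}}_X^2$, where the $\nu^{-3/4}$ forces the threshold. (iii) The proposed ``$\abs{k}^{1/2}$ boost'' for $\tilde u^2\de_y\omega_s$ cannot help, since the worst case $k=1$ is always present; the paper handles this term by placing the decay on $\norm{\tilde u}_{L^\infty}\lesssim\norm{\tilde\omega}_{L^2}^{1-\delta}\norm{\nabla\tilde\omega}_{L^2}^{\delta}$, and for the weighted version by commuting $y$ through $\Delta^{-1}$ and invoking an a priori bound on $\int_0^t\norm{D^2\omega}_{L^2}^2$ --- an ingredient your sketch omits entirely. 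All three issues are repairable within your framework, but as written the estimates would not close at the claimed threshold.
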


The proof proceeds via a bootstrap argument, and relies on the decay properties of the semigroup $\e^{t\Lin}$. This is what motivates our use of the $X$ norm. We thus begin in Section \ref{ssec:basics} by discussing its fundamental properties as relevant to our setting: the enhanced dissipation estimate and the local-wellposedness of \eqref{eq:fullPoison} in this norm. In order to globally propagate control of the $X$ norm, a finer understanding of the structure of the equations is crucial. This is achieved in Section \ref{ssec:struct}. It is followed in Section \ref{ssec:prelim} by a discussion of preliminaries for the proof of the theorem. We show how bounds for the $X$ norm will give us control over the evolution of some quantities that will be essential later on. Finally, Section \ref{ssec:thresh-proof} gives the details for the bootstrap argument that proves Theorem \ref{thm:threshold}.

\subsection*{Notation conventions for Section \ref{sec:nonlin}}

Unlike the case of Section \ref{sec:LinEn}, here we will not be working exclusively with $L^2$ norms, so we shall always specify them. The parameters $\eps_0$ and $C_0$ being initially fixed in size, we will generically denote by $C$ a positive constant that may depend on them. Note that the value of $C$ may change from line to line. In contrast, for further small parameters $0<\delta,\theta\ll 1$ to be chosen later, we shall track their influence by denoting by $C_a$ a constant of the form $C_a=a^{-1}C$, with $a\in\{\delta,\theta\}$.

\subsection{Basic setup}\label{ssec:basics}
In view of the properties of the linear equation $\de_t f=\Lin f$ we introduce the following definition.
\begin{definition}
 For fixed $\eps_0\geq \frac{1}{20}$ and $\lambda_\nu=\frac{\nu^{1/2}}{1+\abs{\log\nu}}$ as in Corollary \ref{cor:linenhanced}, we write
 \begin{equation}
 \norm{f}_{X_t}:=\e^{\eps_0 \lambda_\nu t}\norm{f}_X,\quad \norm{f}_{X[0,T]}:=\sup_{t\in[0,T]}\norm{f}_{X_t},\quad t,T>0.
\end{equation}
\end{definition}
This way the enhanced dissipation of Corollary \ref{cor:linenhanced} can be restated as the following bound for the semigroup $\e^{t\Lin}$:
 \begin{equation}
  \|\e^{t\Lin}\tilde{f}\|_{X_t}\leq C_0\|\tilde{f}\|_{X}.
 \end{equation}
Moreover, the equation \eqref{eq:fullPoison} is locally well-posed in this functional framework.
\begin{lemma}\label{lem:lwp}
 The Poiseuille equation \eqref{eq:fullPoison} is locally (in time) well-posed in $C_t X$. In particular, the mapping $t\mapsto\norm{\tilde\omega(t)}_{X}$ is continuous for a suitably small range of $0\leq t\leq T$ (depending only on $\norm{\omega^{in}}_{X}$ and $\nu$).
\end{lemma}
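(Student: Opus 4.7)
The plan is a standard Picard iteration on the Duhamel formulation
\begin{equation*}
\omega(t) = \e^{t\Lin}\omega^{in} - \int_0^t \e^{(t-s)\Lin}(u(s)\cdot\nabla\omega(s))\,\dd s,
\end{equation*}
carried out in $C([0,T];X)$ for a small $T>0$ depending only on $\nu$ and $\|\omega^{in}\|_X$. Two ingredients drive the argument: boundedness together with parabolic smoothing of the linear semigroup $\e^{t\Lin}$ on $X$, and a bilinear estimate for the nonlinearity $u\cdot\nabla\omega$ in $X$.

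For the linear part, the $L^2$ identity \eqref{eq:L2balance} holds verbatim for the full solution (not just on a single Fourier band), giving $\|\e^{t\Lin}f\|_{L^2}\leq\|f\|_{L^2}$. The analogous weighted computation, obtained by testing $\de_t\omega=\Lin\omega$ against $y^2\omega$, yields
\begin{equation*}
\tfrac{1}{2}\ddt\|y\omega\|^2+\nu\|y\nabla\omega\|^2=\nu\|\omega\|^2-4\l y\de_y\psi,\de_x\psi\r,
\end{equation*}
since the transport term contributes zero via the antisymmetry $\int y^4\omega\de_x\omega=0$ and integration by parts in the dissipation produces $-\nu\|y\nabla\omega\|^2+\nu\|\omega\|^2$. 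The non-local term $\l y\de_y\psi,\de_x\psi\r$ is bounded by $C(\nu)\|\omega\|_X^2$ using weighted Biot--Savart on the $x$-non-zero modes, so Gr\"onwall delivers $\|\e^{t\Lin}f\|_X\leq\e^{C(\nu)t}\|f\|_X$. The viscous term additionally furnishes the standard heat smoothing $\nu^{1/2}\|\nabla\e^{t\Lin}f\|_{L^2}\lesssim t^{-1/2}\|f\|_{L^2}$.

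For the nonlinear estimate, using $y\de_y\omega=\de_y(y\omega)-\omega$ and $\nabla\cdot u=0$ one finds
\begin{equation*}
\|u\cdot\nabla\omega\|_X\lesssim \left(\|u\|_{L^\infty}+\|yu^2\|_{L^\infty}\right)\|\nabla\omega\|_{L^2}+\|u\|_{L^\infty}\|\omega\|_{L^2}.
\end{equation*}
Standard 2D Biot--Savart together with the embedding $H^{1+}\hookrightarrow L^\infty$ bounds $\|u\|_{L^\infty}\lesssim\|\omega\|_{L^2}+\|\nabla\omega\|_{L^2}$, while a parallel weighted estimate (using $u^2=\de_x\psi$) controls $\|yu^2\|_{L^\infty}$ by weighted norms of $\omega$ and $\nabla\omega$. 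Combined with the parabolic smoothing of $\e^{t\Lin}$, the Duhamel integral then maps a small ball in $C([0,T];X)$ into itself and is a contraction of order $O(T^{1/2})$ for $T$ small. Uniqueness and continuity in $C([0,T];X)$ follow in the usual way, and the continuity of $t\mapsto\|\tilde\omega(t)\|_X$ is then immediate from the boundedness of $\NP$ on $X$.

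The main obstacle is the interplay of the weight $y$ with the unbounded Poiseuille background $y^2$: a naive pairing of $y^2\omega$ with the transport term $y^2\de_x\omega$ would involve the uncontrolled quantity $\|y^2\omega\|$, but cancels exactly by antisymmetry. A secondary technical point is the weighted Biot--Savart estimate on the unbounded strip $\T\times\R$, which behaves well on $x$-non-zero modes (sufficient here), whereas the zero mode $\PP u^1$ becomes relevant only in the global bootstrap via the additional hypothesis of Theorem~\ref{thm:threshold-main}.
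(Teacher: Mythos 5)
Your route (Picard iteration on Duhamel in $C([0,T];X)$, using semigroup bounds plus parabolic smoothing and a bilinear estimate) is genuinely different from the paper's, which never sets up a fixed point: it closes a direct a priori energy estimate on the nonlinear equation by testing with $\omega$ and with $y^2\omega$. That choice is not cosmetic. When the weighted nonlinear term is paired with $y^2\omega$ and integrated by parts, it collapses to $-\ip{u^2\omega,y\omega}$, which involves only $u^2=\de_x\psi$; this has no $x$-zero mode, so it is controlled by $\norm{u}_{L^\infty}\lesssim\norm{\omega}_{L^2}^{1/2}(\norm{y\omega}_{L^2}^{1/2}+\norm{\nabla\omega}_{L^2}^{1/2})$ and the estimate closes in $X$. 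Your approach forfeits this integration by parts because Duhamel forces you to bound $\norm{u\cdot\nabla\omega}_X$ itself, and this is where the gap lies.

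Concretely, your bilinear estimate is not correct as stated: the term $y\,u^1\de_x\omega$ is not dominated by $\bigl(\norm{u}_{L^\infty}+\norm{yu^2}_{L^\infty}\bigr)\norm{\nabla\omega}_{L^2}+\norm{u}_{L^\infty}\norm{\omega}_{L^2}$. The identity $y\de_y\omega=\de_y(y\omega)-\omega$ only helps with the $u^2\de_y\omega$ piece (and even there produces $\norm{\de_y(y\omega)}_{L^2}$, not $\norm{\nabla\omega}_{L^2}$); for $y\,u^1\de_x\omega$ you must pay either $\norm{yu^1}_{L^\infty}$ or $\norm{y\de_x\omega}_{L^2}$. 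The former is exactly the quantity that is \emph{not} controlled by $\norm{\omega}_X$ for the shear component $\PP u^1$ -- this is why Theorem \ref{thm:threshold-main} carries the extra hypothesis on $y\PP u^{1,in}$, as you yourself note -- and the latter is a weighted gradient norm absent from your iteration space. Repairing this would require enlarging the space to include $(\nu t)^{1/2}\norm{y\nabla\omega(t)}_{L^2}$ and proving weighted smoothing for $\e^{t\Lin}$; relatedly, your unweighted smoothing claim $\nu^{1/2}\norm{\nabla\e^{t\Lin}f}_{L^2}\lesssim t^{-1/2}\norm{f}_{L^2}$ is not justified for this operator, since the $H^1$ identity \eqref{eq:H1balance} contains the stretching term $2\ip{y\de_x\omega,\de_y\omega}$, which again involves the weighted gradient; at best one gets such a bound for short times with $\norm{f}_X$ on the right after first integrating the weighted energy identity. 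The paper's energy-method proof avoids all of this, which is presumably why it was chosen.
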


We remark that clearly this also implies the continuity of $t\mapsto\norm{\tilde\omega(t)}_{X_t}$ wherever it is defined. 
\begin{proof}
 We content ourselves with giving the relevant a priori estimates for $\norm{\omega}_{L^2}$ and $\norm{y\omega}_{L^2}$, which can be rigorously
 justified within a proper approximation scheme.
 For $\omega$ these are just the energy estimates
 \begin{equation*}
  \frac12\ddt \norm{\omega}_{L^2}^2 +\nu\norm{\nabla \omega}_{L^2}^2=0,
 \end{equation*}
 whereas for $y\omega$ we take the inner product of $\eqref{eq:fullPoison}$ with $y^2\omega$ and compute that (compare also \eqref{eq:boh})
 \begin{align}
\frac12\ddt\| y \omega\|^2
&=\l y \omega, y (\nu\Delta \omega-y^2\de_x \omega +2\de_x\psi -u\cdot\nabla \omega)\r\notag\\
&=\nu\norm{\omega}^2-\nu\norm{y\nabla\omega}^2+2\ip{y\omega,y\de_x\Delta^{-1}\omega}- \l y^2\omega ,u\cdot\nabla \omega\r.
 \end{align}
  First of all, notice that
\begin{align}
\Delta (y\Delta^{-1}\omega )=2\Delta^{-1}\de_y\omega+y\omega.
\end{align}
Thus
\begin{align}
y\Delta^{-1}\omega =2\Delta^{-2}\de_y\omega+\Delta^{-1}(y\omega),
\end{align}
implying the commutator relation
\begin{align}\label{eq:comm1}
[y,\Delta^{-1}]\omega:=y\Delta^{-1}\omega-\Delta^{-1}(y\omega)=2\Delta^{-2}\de_y\omega.
\end{align}
Now
\begin{align}
2\ip{y\omega,y\de_x\Delta^{-1}\omega}&=2\ip{y\tilde\omega,y\de_x\Delta^{-1}\tilde\omega}=
2\ip{y\tilde\omega,[y,\Delta^{-1}]\de_x\tilde\omega}+2\ip{y\tilde\omega,\de_x\Delta^{-1}y\tilde\omega}\\
&=4\ip{y\tilde\omega,\Delta^{-2}\de_y\de_x\tilde\omega}=-4\ip{\Delta^{-1}\de_y(y\tilde\omega),\de_x\Delta^{-1}\tilde\omega},
\end{align}
and thus
\begin{align}
2|\ip{y\omega,y\de_x\Delta^{-1}\omega}|=4|\ip{\Delta^{-1}\de_y(y\tilde\omega),\Delta^{-1}\de_x\tilde\omega}|
\leq 4 \| y\omega\|_{L^2}\|\omega\|_{L^2}.
\end{align}
We now proceed to bound $u$ in $L^\infty$. For this we need the one-dimensional Gagliardo-Nirenberg-Sobolev inequality
 \begin{equation}\label{lem:interpol1}
  \norm{u_s}_{L^\infty_y}\leq c_1 \norm{u_s}_{L^2_y}^{1/2}\norm{u_s}_{\dot{H}^1_y}^{1/2},
 \end{equation}
for some $c_1>0$,
and the following standard interpolation result
 \begin{equation}\label{lem:interpol2}
 \|\tilde{u}\|_{L^\infty}\leq\frac{c_2}{\delta}\|\tilde{u}\|_{L^2}^{1-\delta}\|\nabla\tilde{\omega}\|_{L^2}^\delta\leq\frac{c_2}{\delta}\|\tilde{\omega}\|_{L^2}^{1-\delta}\|\nabla\tilde{\omega}\|_{L^2}^\delta,
 \end{equation}
 holding for some $c_2>0$ and all $0<\delta \ll 1$. Arguing as in \eqref{eq:ident}, we easily deduce that 
 \begin{align}
 \|u_s\|_{L^2_y}\leq 2\| y\de_yu_s\|_{L^2_y}=2\| y\omega_s\|_{L^2_y}, 
 \end{align}
 and therefore
  \begin{equation}
  \norm{u}_{L^\infty}\leq C \norm{\omega}_{L^2}^{1/2}\left[\norm{y\omega}_{L^2}^{1/2}+\norm{\nabla\omega}_{L^2}^{1/2}\right].
  \end{equation}
We can conclude that (below $u^2$ stands for the second component of the vector $u$)
\begin{align}
\abs{\ip{u\cdot\nabla\omega,y^2\omega}}=\abs{\ip{u^2\omega, y\omega}}
&\leq C\left[\norm{y\omega}_{L^2}^{1/2}+\norm{\nabla\omega}_{L^2}^{1/2}\right]\norm{\omega}_{L^2}^{3/2}\norm{y\omega}_{L^2}\\
&\leq \frac{\nu}{2}\norm{\nabla\omega}_{L^2}^2+  \frac{C}{\nu^{1/3}}\norm{\omega}_{L^2}^{2}\norm{y\omega}^{4/3}_{L^2}+C\norm{\omega}_{L^2}^{3/2}\norm{y\omega}_{L^2}^{3/2}.
\end{align}
Thus
 \begin{equation}
 \begin{aligned}
  \frac12\ddt \norm{y\omega}_{L^2}^2+\nu\norm{y\nabla \omega}_{L^2}^2\leq  \frac{\nu}{2}\norm{\nabla\omega}_{L^2}^2 +C\Bigg[&\nu\norm{\omega}_{L^2}^2+\norm{y\omega}_{L^2}\norm{\omega}_{L^2}\\
  & +\frac{1}{\nu^{1/3}}\norm{\omega}_{L^2}^{2}\norm{y\omega}^{4/3}_{L^2}+\norm{\omega}_{L^2}^{3/2}\norm{y\omega}_{L^2}^{3/2}\Bigg],
 \end{aligned}
 \end{equation}
 and therefore
 \begin{equation}
  \ddt \left[\norm{\omega}_{L^2}^2+\norm{y\omega}_{L^2}^2\right]\leq C\norm{\omega}_{L^2}^2+\norm{y\omega}_{L^2}\norm{\omega}_{L^2}+  \frac{C}{\nu^{1/3}}\norm{\omega}_{L^2}^{2}\norm{y\omega}^{4/3}_{L^2}+C\norm{\omega}_{L^2}^{3/2}\norm{y\omega}_{L^2}^{3/2}.
 \end{equation}
This provides the key local-in-time a priori estimate on $\|\omega\|_X$, and allows us to conclude the proof.
\end{proof}

\subsection{Structure of the equations and energy estimates}\label{ssec:struct}
Equation \eqref{eq:fullPoison} satisfies the following energy estimates for $\omega$ and $u$:
\begin{equation}\label{eq:full_energy}
 \begin{aligned}
  &\frac12\ddt \norm{u}_{L^2}^2 +\nu\norm{\nabla u}_{L^2}^2=0,\\
  &\frac12\ddt \norm{\omega}_{L^2}^2 +\nu\norm{\nabla \omega}_{L^2}^2=0.
 \end{aligned}
\end{equation}
To go further we will investigate the structure of the equations for the modes $k=0$ and $k\neq 0$.

\subsubsection{$k=0$ mode}
One computes directly from the structure of the Biot-Savart law that $u_s\de_x\omega_s=0$, so that we have $\PP(u\cdot\nabla\omega)=\PP(\tilde u\cdot\nabla\tilde\omega)+\PP(\tilde u^2\de_y\omega_s)+\PP(u_s\de_x\tilde\omega)=\PP(\tilde u\cdot\nabla\tilde\omega)$, i.e.\ there are no self-interactions of the $k=0$ mode. For $\omega_s$ we thus simply have the equation
\begin{equation}\label{eq:omega_s}
 \de_t\omega_s+\PP(\tilde{u}\cdot\nabla\tilde{\omega})=\nu\de_{yy}\omega_s.
\end{equation}
In addition, we observe the following structure of the nonlinearity in this case:
\begin{equation}
\begin{aligned}
 \PP(\tilde{u}\cdot\nabla\tilde{\omega})&=\PP(\nabla\cdot(\tilde{u}\tilde{\omega}))=\int_\T\nabla\cdot(\tilde{u}\tilde{\omega})\dd x=\de_y\int_\T\tilde{u}^2\tilde{\omega}\dd x=\de_y\PP(\tilde{u}^2\tilde{\omega})\\
 &=\de_y\int_\T(\de_x\tilde{\psi}\de_{yy}\tilde{\psi})\dd x=\de_{yy}\int_\T(\de_x\tilde{\psi}\de_{y}\tilde{\psi})\dd x= -\de_{yy}\PP(\tilde{u}^1\tilde{u}^2),
\end{aligned} 
\end{equation}
where we used that $\tilde{\omega}=\de_{xx}\tilde{\psi}+\de_{yy}\tilde{\psi}$. The equations for $u_s$ and $\psi_s$ thus read
\begin{align}
  \de_t u_s+\PP(\tilde{u}^2\tilde{\omega})&=\nu\de_{yy}u_s,\\
  \de_t \psi_s-\PP(\tilde{u}^1\tilde{u}^2)&=\nu\de_{yy}\psi_s.
 \end{align}
With this we easily obtain additional energy estimates: For weights in $y$ this yields
\begin{equation}\label{eq:yomega_s}
 \frac{1}{2}\ddt\norm{y\omega_s}_{L^2}^2=\nu\norm{\omega_s}_{L^2}^2-\nu\norm{y\de_y \omega_s}_{L^2}^2-\ip{\PP(\tilde{u}\cdot\nabla\tilde{\omega}),y^2\omega_s}
\end{equation}
and
\begin{equation}\label{eq:yu_s}
 \frac{1}{2}\ddt\norm{yu_s}_{L^2}^2=\nu\norm{u_s}_{L^2}^2-\nu\norm{y\de_y u_s}_{L^2}^2-\ip{\PP(\tilde{u}^2\tilde{\omega}),y^2u_s}.
\end{equation}
To control the term with the positive sign on the right hand side of \eqref{eq:yomega_s} we will use \eqref{eq:full_energy}, whereas for that in \eqref{eq:yu_s} we compute the $\dot{H}_y^{-1}$ norm of $u_s$ as follows:
\begin{equation}\label{eq:psi_s}
 \frac{1}{2}\ddt\norm{\psi_s}_{L^2}^2=-\nu\norm{u_s}_{L^2}^2+\ip{\PP(\tilde{u}^1\tilde{u}^2),\psi_s}.
\end{equation}

\subsubsection{$k\neq 0$ modes}
A useful consequence in our setup is that
\begin{equation}
 \norm{\tilde{u}}_{L^2}\leq C\norm{\tilde\omega}_{X},
\end{equation}
which can be seen as follows: since $\norm{\tilde{u}}_{L^2}\leq \sqrt{2}\norm{y\tilde{u}}_{L^2}^{1/2}\norm{\de_y\tilde{u}}_{L^2}^{1/2}$ (see also \eqref{eq:spec-gap}) it will suffice to bound $\norm{\de_y\tilde{u}}_{L^2}\leq\norm{\tilde\omega}_{L^2}$, and by commuting $y$ with the Biot-Savart law we get $\norm{y\tilde{u}}_{L^2}\leq3\norm{\tilde\omega}_{L^2}+\norm{y\tilde\omega}_{L^2}$.
Moreover, we have the following energy estimates on $\tilde\omega$.
\begin{lemma}\label{lem:tilde-en}
 There exists $C>0$ such that for $\delta>0$ there holds
 \begin{equation}
  \int_0^t\norm{\nabla\tilde\omega}_{L^2}^2 \leq\frac{C}{\delta} \nu^{-1}(\nu^{-1-\delta}\norm{\omega^{in}}_{L^2}^2)^{\frac{1}{1-\delta}}\int_0^t\norm{\tilde\omega}_{L^2}^2 +\nu^{-1}\norm{\omega^{in}}_{L^2}^2,\quad 0\leq t\leq T.
 \end{equation}
\end{lemma}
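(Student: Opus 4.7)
The plan is to close an $L^2$ energy identity for $\tilde\omega$ alone, using only the basic energy decay $\|\omega(t)\|_{L^2}\leq\|\omega^{in}\|_{L^2}$ from \eqref{eq:full_energy} together with the interpolation on $\|\tilde u\|_{L^\infty}$ already established in the proof of Lemma~\ref{lem:lwp}. First I would project \eqref{eq:fullPoison} with $\NP$ --- noting $[\Lin,\NP]=0$ --- to obtain $\de_t\tilde\omega=\Lin\tilde\omega-\NP(u\cdot\nabla\omega)$, and then test against $\tilde\omega$. Antisymmetry of $\de_x$ together with $\tilde\omega=\Delta\tilde\psi$ kills both the transport term $y^2\de_x\tilde\omega$ and the stream-function term $2\de_x\tilde\psi$, leaving
\[ \tfrac12\tfrac{d}{dt}\|\tilde\omega\|_{L^2}^2+\nu\|\nabla\tilde\omega\|_{L^2}^2 = -\ip{u\cdot\nabla\omega,\tilde\omega}. \]

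\textbf{Dissecting the nonlinearity.} Writing $u=(u_s,0)+\tilde u$ and $\omega=\omega_s+\tilde\omega$, every contribution to $\ip{u\cdot\nabla\omega,\tilde\omega}$ vanishes --- by divergence-freeness of $\tilde u$, by antisymmetry of $\de_x$, or by the identity $u_s\de_x\omega_s\equiv 0$ --- except for the single shear/non-shear coupling $\ip{\tilde u^2\de_y\omega_s,\tilde\omega}$. A single integration by parts in $y$ produces two pieces; the one containing $\de_y\tilde u^2$ is rewritten using $\de_y\tilde u^2=-\de_x\tilde u^1$ and a further $x$-integration by parts (legitimate since $\omega_s$ is $x$-independent), so that in the end
\[ \ip{u\cdot\nabla\omega,\tilde\omega} = -\ip{\omega_s\tilde u,\nabla\tilde\omega}. \]
This is the key structural reduction and the only place where the specific nonlinear cancellations come in.

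\textbf{Closing the estimate.} From here, Hölder's inequality combined with $\|\tilde u\|_{L^\infty}\leq \tfrac{C}{\delta}\|\tilde\omega\|_{L^2}^{1-\delta}\|\nabla\tilde\omega\|_{L^2}^{\delta}$ (from Lemma~\ref{lem:lwp}) and $\|\omega_s\|_{L^2}\leq\|\omega\|_{L^2}\leq\|\omega^{in}\|_{L^2}$ (from \eqref{eq:full_energy}) yields
\[ |\ip{\omega_s\tilde u,\nabla\tilde\omega}|\leq \tfrac{C}{\delta}\|\omega^{in}\|_{L^2}\|\tilde\omega\|_{L^2}^{1-\delta}\|\nabla\tilde\omega\|_{L^2}^{1+\delta}. \]
I would then apply Young's inequality with conjugate exponents $\tfrac{2}{1+\delta}$ and $\tfrac{2}{1-\delta}$, calibrated so that the $\|\nabla\tilde\omega\|^2$ byproduct absorbs $\tfrac{\nu}{2}\|\nabla\tilde\omega\|^2$ into the left-hand side. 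This produces a differential inequality of the form $\tfrac{d}{dt}\|\tilde\omega\|^2+\nu\|\nabla\tilde\omega\|^2\leq \tfrac{C}{\delta}\nu^{-(1+\delta)/(1-\delta)}\|\omega^{in}\|^{2/(1-\delta)}\|\tilde\omega\|^2$. Integrating on $[0,t]$, discarding $\|\tilde\omega(t)\|^2\geq 0$, and dividing by $\nu$ matches the statement, using the identity $1+(1+\delta)/(1-\delta)=2/(1-\delta)$.

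\textbf{Main obstacle.} I expect the essential point not to be the algebra --- which is forced by the $1/(1-\delta)$-exponent structure of the statement --- but rather the clean isolation of the single surviving shear/non-shear coupling $\ip{\omega_s\tilde u,\nabla\tilde\omega}$. Once that reduction is performed, the $L^\infty$ interpolation of Lemma~\ref{lem:lwp} and the unconditional $L^2$ bound on $\omega_s$ dictate the form of the right-hand side; in particular, the use of the plain $L^2$ bound on $\omega_s$ (rather than any $\dot H^1$ information) is precisely what ensures that no $\int_0^t\|\nabla\omega\|_{L^2}^2$ contribution enters the final estimate.
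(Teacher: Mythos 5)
Your proposal is correct and follows essentially the same route as the paper: the same energy identity, the same reduction of the nonlinearity to $-\ip{\omega_s\tilde u,\nabla\tilde\omega}$ (the paper gets there slightly more compactly by writing $\ip{u\cdot\nabla\omega,\tilde\omega}=\ip{\nabla\cdot(u\omega_s),\tilde\omega}$ and integrating by parts once, rather than your term-by-term bookkeeping with $\de_y\tilde u^2=-\de_x\tilde u^1$), and the same interpolation-plus-Young absorption of $\tfrac{\nu}{2}\|\nabla\tilde\omega\|_{L^2}^2$. Nothing is missing.
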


\begin{proof}
 We have
 \begin{equation}
  \frac{1}{2}\ddt\norm{\tilde\omega}_{L^2}^2=-\nu\norm{\nabla\tilde\omega}_{L^2}^2-\ip{u\cdot\nabla\omega,\tilde\omega}.
 \end{equation}
 To bound the nonlinearity we notice that by the divergence structure and since $u_s\cdot\nabla\omega_s=0$ there holds
 \begin{equation}
  \ip{u\cdot\nabla\omega,\tilde\omega}=\ip{u\cdot\nabla\omega_s,\tilde\omega}=-\ip{u\omega_s,\nabla\tilde\omega}=-\ip{\tilde{u}\omega_s,\nabla\tilde\omega},
 \end{equation}
 so that for $\delta>0$ we obtain the bound
 \begin{equation}
  \abs{\ip{u\cdot\nabla\omega,\tilde\omega}}\leq\norm{\tilde{u}}_{L^\infty}\norm{\omega_s}_{L^2}\norm{\nabla\tilde\omega}_{L^2}\leq \frac{C}{\delta} (\nu^{-1-\delta}\norm{\omega^{in}}_{L^2}^2)^{\frac{1}{1-\delta}}\norm{\tilde\omega}_{L^2}^2+\frac{\nu}{2}\norm{\nabla\tilde\omega}_{L^2}^2.
 \end{equation}
\end{proof}

\subsubsection{Second order derivatives}
For future use we state the following
\begin{lemma}\label{lem:D2omega}
 Assume $\omega$ solves \eqref{eq:fullPoison}. Then
 \begin{equation}
  \int_0^t\norm{D^2\omega}_{L^2}^2 \leq 16\nu^{-2}\int_0^t\norm{y\tilde\omega}_{L^2}^2 +4C\nu^{-3}\norm{\omega^{in}}_{L^2}^4 + \nu^{-1}\norm{\nabla\omega^{in}}_{L^2}^2.
 \end{equation}
\end{lemma}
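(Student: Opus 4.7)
The plan is to do an $H^1$ energy estimate on \eqref{eq:fullPoison} and integrate in time, closing with the $L^2$ energy identity \eqref{eq:full_energy}; since $\norm{D^2\omega}_{L^2}^2=\norm{\Delta\omega}_{L^2}^2$ (two integrations by parts, boundary terms vanishing on $\T\times\R$ by decay), it suffices to bound $\int_0^t\norm{\Delta\omega}_{L^2}^2$. Testing \eqref{eq:fullPoison} against $-\Delta\omega$ and borrowing the linear computation \eqref{eq:H1balance} from Lemma \ref{lem:en_est} produces
\begin{equation*}
\tfrac{1}{2}\ddt\norm{\nabla\omega}_{L^2}^2+\nu\norm{\Delta\omega}_{L^2}^2+2\ip{y\de_x\omega,\de_y\omega}=\ip{u\cdot\nabla\omega,\Delta\omega}.
\end{equation*}
The cross term is harmless: since $\de_x\omega=\de_x\tilde\omega$, an integration by parts in $x$ yields $\ip{y\de_x\omega,\de_y\omega}=-\ip{y\tilde\omega,\de_x\de_y\tilde\omega}$, and Young's inequality bounds $2\abs{\ip{y\de_x\omega,\de_y\omega}}$ by $\tfrac{\nu}{8}\norm{\Delta\omega}_{L^2}^2+\tfrac{C}{\nu}\norm{y\tilde\omega}_{L^2}^2$.

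The nonlinear term $\ip{u\cdot\nabla\omega,\Delta\omega}$ is the main point. Using $\nabla\cdot u=0$ and integrating by parts gives $\ip{u\cdot\nabla\omega,\Delta\omega}=-\sum_{i,j}\ip{\de_j u_i\,\de_i\omega,\de_j\omega}$, whence by H\"older
\begin{equation*}
\abs{\ip{u\cdot\nabla\omega,\Delta\omega}}\leq\norm{\nabla u}_{L^2}\norm{\nabla\omega}_{L^4}^2.
\end{equation*}
Now one invokes the two-dimensional identity $\norm{\nabla u}_{L^2}=\norm{\omega}_{L^2}$ (another consequence of divergence-free integration by parts, valid on $\T\times\R$) together with Ladyzhenskaya's inequality $\norm{\nabla\omega}_{L^4}^2\leq C\norm{\nabla\omega}_{L^2}\norm{\Delta\omega}_{L^2}$ and the monotonicity $\norm{\omega(t)}_{L^2}\leq\norm{\omega^{in}}_{L^2}$ from \eqref{eq:full_energy}; Young's inequality then yields
\begin{equation*}
\abs{\ip{u\cdot\nabla\omega,\Delta\omega}}\leq\tfrac{\nu}{4}\norm{\Delta\omega}_{L^2}^2+\tfrac{C}{\nu}\norm{\omega^{in}}_{L^2}^2\norm{\nabla\omega}_{L^2}^2.
\end{equation*}

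To close, I would absorb the $\norm{\Delta\omega}^2$ terms on the left, integrate from $0$ to $t$, and use the $L^2$ energy inequality $\nu\int_0^t\norm{\nabla\omega}_{L^2}^2\leq\tfrac{1}{2}\norm{\omega^{in}}_{L^2}^2$ to handle the remaining $\norm{\omega^{in}}^2\norm{\nabla\omega}^2$ integrand, producing the $\nu^{-3}\norm{\omega^{in}}_{L^2}^4$ term; a final division by $\nu$ gives the stated inequality, the specific constants $16\nu^{-2}$, $4C\nu^{-3}$, $\nu^{-1}$ being merely the outcome of the Young choices made above. The main obstacle is the nonlinear term on the unbounded domain: because the shear component of $u$ need not decay at infinity, $\norm{u}_{L^p}$ is typically infinite for any $p<\infty$, so the standard vorticity tricks based on $\norm{u}_{L^4}$ or $\norm{u}_{L^\infty}$ do not apply directly (they would require the more elaborate estimate from Lemma \ref{lem:lwp}). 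The workaround is to push all derivatives onto $u$ via the divergence-free integration by parts, so that only $\norm{\nabla u}_{L^2}=\norm{\omega}_{L^2}$ enters, controlled by the conserved $L^2$ mass.
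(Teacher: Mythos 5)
Your argument is correct and follows essentially the same route as the paper: test against $\Delta\omega$, reduce the linear transport term to $\ip{y\tilde\omega,\de_{xy}\omega}$ and absorb it by Young, handle the nonlinearity via the divergence-free integration by parts $\ip{u\cdot\nabla\omega,\Delta\omega}=-\sum_{i,j}\ip{\de_j u^i\de_i\omega,\de_j\omega}$ together with $\norm{\nabla u}_{L^2}=\norm{\omega}_{L^2}$ and Ladyzhenskaya, and close with the $L^2$ energy identity. The only differences are cosmetic (choice of Young constants), and your closing remark correctly identifies why one must put the derivative on $u$ rather than estimate $u$ itself in $L^p$.
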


\begin{proof}
 We recall that $\norm{D^2\omega}_{L^2}=\norm{\Delta\omega}_{L^2}$. Testing the equation $\de_t\omega+u\cdot\nabla\omega=\Lin\omega$ with $\Delta\omega$ then yields
 \begin{equation}
  \nu\int_0^t\norm{\Delta\omega}_{L^2}^2=-\norm{\nabla\omega(t)}_{L^2}^2+\norm{\nabla\omega^{in}}_{L^2}^2+\int_0^t\ip{y^2\de_x\omega-2\de_x\Delta^{-1}\omega,\Delta\omega}+\ip{u\cdot\nabla\omega,\Delta\omega}.
 \end{equation}
 By antisymmetry and since $\de_x\omega=\de_x\tilde\omega$ we deduce that
 \begin{equation}
  \ip{y^2\de_x\omega-2\de_x\Delta^{-1}\omega,\Delta\omega}=\ip{y^2\de_x\tilde\omega,\de_{yy}\omega}=2\ip{y\tilde\omega,\de_{xy}\omega},
 \end{equation}
 so we can bound
 \begin{equation}
  \int_0^t\abs{\ip{y^2\de_x\omega-2\de_x\Delta^{-1}\omega,\Delta\omega}}\leq 2\int_0^t\norm{y\tilde\omega}_{L^2}\norm{\Delta\omega}_{L^2} \leq\frac{8}{\nu}\int_0^t\norm{y\tilde\omega}_{L^2}^2 +\frac{\nu}{4}\int_0^t\norm{\Delta\omega}_{L^2}^2 .
 \end{equation}
 
 For the nonlinear term we notice that by the divergence structure we have
 \begin{equation}
  \ip{u\cdot\nabla\omega,\Delta\omega}=-\sum_{i,j}\ip{\de_ju^i\de_i\omega,\de_j\omega},
 \end{equation}
 and thus
 \begin{equation}
 \begin{aligned}
  \int_0^t\abs{\ip{u\cdot\nabla\omega,\Delta\omega}}&\leq\norm{\omega}_{L^\infty_t L^2}\int_0^t\norm{\nabla\omega}_{L^4}^2\leq C \norm{\omega}_{L^\infty_t L^2}\int_0^t\norm{\nabla\omega}_{L^2}\norm{\Delta\omega}_{L^2}\\
  &\leq \frac{4C\norm{\omega}_{L^\infty_t L^2}^2}{\nu}\int_0^t\norm{\nabla\omega}_{L^2}^2+\frac{\nu}{4}\int_0^t\norm{\Delta\omega}_{L^2}^2.
 \end{aligned} 
 \end{equation}
 Since $\norm{\omega}_{L^\infty_t L^2}\leq\|\omega^{in}\|_{L^2}$ by \eqref{eq:full_energy}, this yields the claim.
\end{proof}

\subsection{Preliminaries of the proof of Theorem \ref{thm:threshold}}\label{ssec:prelim}
As announced, here we will see how control of $\norm{\tilde{\omega}}_{X_t}$ gives bounds for the $k=0$ mode and second order derivatives, as well as an energy estimate for $\tilde\omega$ (Proposition \ref{prop:prelim}). We emphasize that the estimates here are global in nature, i.e.\ hold as long as one has suitable bounds on $\tilde\omega$ and do not need to be proved by iteration -- this is in contrast to the bootstrap for $\tilde\omega$.

\begin{proposition}\label{prop:prelim}
 Let $\omega$ solve \eqref{eq:fullPoison}, and let $T>0$. Assume that $\norm{\tilde\omega}_{X[0,T]}\leq C\norm{\tilde\omega^{in}}_{X}$ for some $C>0$. Then for $0\leq t\leq T$ we have:
 \begin{enumerate}
  \item Energy estimates for $\tilde\omega$: 
  \begin{equation}
   \int_0^t\norm{\nabla\tilde\omega}_{L^2}^2\leq C_\delta \nu^{-1}(\nu^{-1-\delta}\norm{\omega^{in}}_{L^2}^2)^{\frac{1}{1-\delta}}\lambda_\nu^{-1}\norm{\tilde\omega^{in}}_{X}^2 +\nu^{-1}\norm{\omega^{in}}_{L^2}^2,\quad 0<\delta\ll 1.
  \end{equation}
  \item Control of second order derivatives of $\omega$:
  \begin{equation}\label{eq:2nd_deriv}
  \int_0^t\norm{D^2\omega}_{L^2}^2 \leq C\nu^{-2}\lambda_\nu^{-1}\norm{\tilde\omega^{in}}_{X}^2+4C\nu^{-3}\norm{\omega^{in}}_{L^2}^4+\nu^{-1}\norm{\nabla\omega^{in}}_{L^2}^2.
  \end{equation}
  \item Lower order (and weighted) energy estimates for $u_s$:
  \begin{equation}\label{eq:yu_s-est}
  \norm{yu_s(t)}_{L^2}+\norm{\psi_s(t)}_{L^2}\leq \norm{yu_s^{in}}_{L^2}+\norm{\psi_s^{in}}_{L^2}+C\nu^{-1/4}\lambda_\nu^{-3/4}\norm{\omega^{in}}_{X}^2.
 \end{equation}
  \item Weighted estimates for $y\omega_s$:
  \begin{equation}\label{eq:yomega_s-est}
  \begin{aligned}
  \norm{y\omega_s(t)}_{L^2}+\norm{u(t)}_{L^2}&\leq \norm{y\omega_s^{in}}_{L^2}+\norm{u^{in}}_{L^2}+ +C\lambda_\nu^{-1/2}\norm{\omega^{in}}_{X}\left( \int_0^t\norm{\nabla\tilde\omega}^2_{L^2} \right)^{1/2}\\
  &\quad +C_\theta^2[\lambda_\nu^{-1/2}\nu^{(-1+\theta)/2}]\left(\int_0^t\norm{D^2\tilde\omega}_{L^2}^2\right)^{\theta/2},\quad 0<\theta\ll 1.
 \end{aligned}
 \end{equation}
 \end{enumerate}
\end{proposition}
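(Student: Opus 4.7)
The key insight throughout is to convert the hypothesis $\norm{\tilde\omega}_{X[0,T]}\leq C\norm{\tilde\omega^{in}}_X$ into time-integrable control. Since it gives $\norm{\tilde\omega(s)}_X\leq Ce^{-\eps_0\lambda_\nu s}\norm{\tilde\omega^{in}}_X$, integration yields $\int_0^t\norm{\tilde\omega}_{L^2}^2 \dd s$ and $\int_0^t\norm{y\tilde\omega}_{L^2}^2 \dd s$ both bounded by $C\lambda_\nu^{-1}\norm{\tilde\omega^{in}}_X^2$. Parts (1) and (2) then follow by direct substitution of these bounds into Lemmas \ref{lem:tilde-en} and \ref{lem:D2omega} respectively (using $\norm{\tilde\omega^{in}}_X\leq\norm{\omega^{in}}_X$).

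For part (3), the plan is to combine the evolution identities \eqref{eq:yu_s} and \eqref{eq:psi_s}: the terms $\pm\nu\norm{u_s}_{L^2}^2$ on the two right-hand sides cancel \emph{exactly}, leaving only the dissipation $-\nu\norm{y\de_y u_s}^2$ (which we discard) plus the nonlinear couplings. These are bounded by
\begin{align*}
 |\ip{\PP(\tilde u^2\tilde\omega),y^2 u_s}| &\leq \norm{\tilde u}_{L^\infty}\norm{y\tilde\omega}_{L^2}\norm{y u_s}_{L^2} \leq \norm{\tilde u}_{L^\infty}\norm{\tilde\omega}_X\norm{y u_s}_{L^2},\\
 |\ip{\PP(\tilde u^1\tilde u^2),\psi_s}| &\leq C\norm{\tilde u}_{L^\infty}\norm{\tilde\omega}_X\norm{\psi_s}_{L^2},
\end{align*}
using $\norm{\tilde u}_{L^2}\leq C\norm{\tilde\omega}_X$. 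Setting $B(t)^2=\norm{y u_s}_{L^2}^2+\norm{\psi_s}_{L^2}^2$, the differential inequality $\ddt B^2\lesssim \norm{\tilde u}_{L^\infty}\norm{\tilde\omega}_X B$ integrates to $B(t)\leq B(0)+C\int_0^t\norm{\tilde u}_{L^\infty}\norm{\tilde\omega}_X \dd s$. Cauchy--Schwarz in time together with the Agmon-type bound $\norm{\tilde u}_{L^\infty}^2\leq C\norm{\tilde\omega}_{L^2}\norm{\nabla\tilde\omega}_{L^2}$, the hypothesis for $\int_0^t\norm{\tilde\omega}_X^2$, and part (1) for $\int_0^t\norm{\nabla\tilde\omega}_{L^2}^2$ then assemble into the claimed $C\nu^{-1/4}\lambda_\nu^{-3/4}\norm{\omega^{in}}_X^2$ bound.

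For part (4), the crucial cancellation is between the growing term $\nu\norm{\omega_s}^2$ in \eqref{eq:yomega_s} and the corresponding piece of the basic energy identity $\tfrac12\ddt\norm{u}_{L^2}^2=-\nu\norm{\nabla u}_{L^2}^2 \leq -\nu\norm{\omega_s}^2$. Adding these yields
\begin{equation*}
 \tfrac12\ddt[\norm{y\omega_s}^2+\norm{u}^2]\leq -\nu\norm{y\de_y\omega_s}^2-\nu\norm{\tilde\omega}^2-\ip{\PP(\tilde u\cdot\nabla\tilde\omega),y^2\omega_s}.
\end{equation*}
The plan is to treat the nonlinear term using the divergence identity $\PP(\tilde u\cdot\nabla\tilde\omega)=\de_y\PP(\tilde u^2\tilde\omega)$: integration by parts yields a low-order piece $\lesssim \norm{\tilde u}_{L^\infty}\norm{\tilde\omega}_X\norm{y\omega_s}$ and a high-order piece that Young's inequality bounds by $\frac{1}{4\nu}\norm{\tilde u}_{L^\infty}^2\norm{y\tilde\omega}_{L^2}^2+\nu\norm{y\de_y\omega_s}^2$, the latter absorbed into the dissipation. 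With $G(t)^2=\norm{y\omega_s}^2+\norm{u}^2$ and solving the resulting quadratic-type inequality via a supremum argument gives
\begin{equation*}
 G(t)\leq G(0)+C\int_0^t\norm{\tilde u}_{L^\infty}\norm{\tilde\omega}_X \dd s + C\Bigl(\tfrac{1}{\nu}\int_0^t\norm{\tilde u}_{L^\infty}^2\norm{y\tilde\omega}_{L^2}^2 \dd s\Bigr)^{1/2}.
\end{equation*}

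The main obstacle is decomposing these two integrals into the precise combination in the statement. The first is handled by applying \eqref{lem:interpol2} with $\delta$ close to $1$, interpolating between $\norm{\tilde\omega}_{L^2}$ and $\norm{\nabla\tilde\omega}_{L^2}$; this produces $(\int_0^t\norm{\nabla\tilde\omega}_{L^2}^2)^{1/2}$ with coefficient $C\lambda_\nu^{-1/2}\norm{\omega^{in}}_X$ after invoking the enhanced dissipation for $\int_0^t\norm{\tilde\omega}_X^2$. The second requires pushing \eqref{lem:interpol2} to a higher-order version involving $\norm{D^2\tilde\omega}_{L^2}$, yielding the $(\int_0^t\norm{D^2\tilde\omega}_{L^2}^2)^{\theta/2}$ contribution with the $\theta$-dependent constant $C_\theta^2$; the exponent $\nu^{(-1+\theta)/2}$ reflects the $\nu^{-1/2}$ loss from the Young splitting corrected by $\theta$. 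Balancing the $\delta\to 1$ and $\theta\to 0$ limits against their blowing-up constants is the delicate bookkeeping that gives the stated exponents.
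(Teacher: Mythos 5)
Parts (1) and (2), and the cancellation structure driving parts (3) and (4), are exactly as in the paper, and your treatment of (1)--(3) is essentially the paper's argument. One correction in (3): to land on the stated bound $C\nu^{-1/4}\lambda_\nu^{-3/4}\norm{\omega^{in}}_X^2$ you should control $\bigl(\int_0^t\norm{\nabla\tilde\omega}_{L^2}^2\bigr)^{1/4}$ via the plain energy identity \eqref{eq:full_energy}, i.e.\ $\int_0^t\norm{\nabla\tilde\omega}_{L^2}^2\leq\int_0^t\norm{\nabla\omega}_{L^2}^2\leq\nu^{-1}\norm{\omega^{in}}_{L^2}^2$, and \emph{not} via part (1): the latter carries the additional term $C_\delta\nu^{-1}(\nu^{-1-\delta}\norm{\omega^{in}}_{L^2}^2)^{1/(1-\delta)}\lambda_\nu^{-1}\norm{\tilde\omega^{in}}_X^2$, which cannot be absorbed under the hypotheses of the proposition (no smallness of the data is assumed at this stage), so the clean right-hand side of \eqref{eq:yu_s-est} would not follow.

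The genuine gap is in part (4). The paper never integrates by parts in $y$ there: it bounds $\abs{\ip{\PP(\tilde u\cdot\nabla\tilde\omega),y^2\omega_s}}\leq\norm{y(\tilde u\cdot\nabla\tilde\omega)}_{L^2}\norm{y\omega_s}_{L^2}$ directly and then controls $\norm{y(\tilde u\cdot\nabla\tilde\omega)}_{L^2}$ by commuting the weight through the Biot--Savart law via \eqref{eq:y-nonlin}, so that the main term is $\nabla^\perp\Delta^{-1}(y\tilde\omega)\cdot\nabla\tilde\omega$; the $L^{2/\theta}\times L^{2/(1-\theta)}$ H\"older split \eqref{eq:easybdd1}--\eqref{eq:easybdd2} then yields $C_\theta^2\norm{y\tilde\omega}_{L^2}\norm{\nabla\tilde\omega}_{L^2}^{1-\theta}\norm{D^2\tilde\omega}_{L^2}^\theta$, and the factor $\nu^{(-1+\theta)/2}$ in \eqref{eq:yomega_s-est} arises from $\bigl(\int_0^t\norm{\nabla\tilde\omega}_{L^2}^2\bigr)^{(1-\theta)/2}\leq(\nu^{-1}\norm{\omega^{in}}_{L^2}^2)^{(1-\theta)/2}$ --- not from a Young splitting against the dissipation. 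Your route instead leaves the remainder $\bigl(\nu^{-1}\int_0^t\norm{\tilde u}_{L^\infty}^2\norm{y\tilde\omega}_{L^2}^2\bigr)^{1/2}$, which carries a hard $\nu^{-1/2}$ of a different origin, and the step that would convert it into the stated combination $C_\theta^2\lambda_\nu^{-1/2}\nu^{(-1+\theta)/2}\bigl(\int_0^t\norm{D^2\tilde\omega}_{L^2}^2\bigr)^{\theta/2}$ is exactly the part you defer as ``delicate bookkeeping''; it does not go through as described. (Also, \eqref{lem:interpol2} is only stated, and only true with constant $c_2/\delta$, for $0<\delta\ll1$, so ``applying it with $\delta$ close to $1$'' is not available as written, though the endpoint $\norm{\tilde u}_{L^\infty}\leq C\norm{\nabla\tilde\omega}_{L^2}$ does hold for the nonzero $x$-modes by Agmon together with $\norm{\tilde\omega}_{L^2}\leq\norm{\de_x\tilde\omega}_{L^2}$.) To be fair, your remainder can be estimated by $\sup_t\norm{\tilde u}_{L^\infty}\cdot\nu^{-1/2}\bigl(\int_0^t\norm{y\tilde\omega}_{L^2}^2\bigr)^{1/2}\leq C\nu^{-1/2}\lambda_\nu^{-1/2}\norm{\tilde\omega^{in}}_X^2$, which is a valid inequality and is in fact strong enough for the application in Remark \ref{rem:bounds}; but it is a different estimate from \eqref{eq:yomega_s-est}, and the alternative split $\sup_t\norm{y\tilde\omega}_{L^2}\cdot\nu^{-1/2}\bigl(\int_0^t\norm{\tilde u}_{L^\infty}^2\bigr)^{1/2}\lesssim\nu^{-3/4}\lambda_\nu^{-1/4}\norm{\omega^{in}}_X^2$ is too weak for small $\mu$. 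So part (4) as stated is not proved by your argument.
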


\begin{remark}[Bounds in the setting of Theorem \ref{thm:threshold}]\label{rem:bounds}
  The assumptions of Theorem \ref{thm:threshold} imply that 
  \begin{equation}
   \norm{u_s^{in}}_{L^2}+\norm{\psi_s^{in}}_{L^2}\leq C\nu^{3/4+2\mu},
  \end{equation} 
  and hence also $\norm{u^{in}}_{L^2}\leq C\nu^{3/4+2\mu}$. Furthermore, as a consequence of the energy inequality \eqref{eq:full_energy} for $\omega$ we may suppose without loss of generality that $\norm{\nabla\omega^{in}}_{L^2}^2\leq C\nu^{2\mu}$.
  Under the additional assumptions of smallness of the additional data as in Theorem \ref{thm:threshold}, this yields the bounds 
  \begin{equation}\label{eq:useful_bds}
  \begin{aligned}
   &\int_0^t\norm{D^2\omega}_{L^2}^2 \leq C\nu^{-1+2\mu},\quad \int_0^t\norm{\nabla\tilde\omega}_{L^2}^2 \leq C \nu^{-1}\norm{\tilde\omega^{in}}_{X}^2,\\
   &\norm{yu_s(t)}_{L^2}+\norm{\psi_s(t)}_{L^2}+\norm{y\omega_s(t)}_{L^2}+\norm{u(t)}_{L^2}\leq C \nu^{3/4+2\mu}.
  \end{aligned}
  \end{equation}
\end{remark}

\begin{proof}[Proof of Proposition \ref{prop:prelim}]
 Items (1) and (2) follow directly by inserting the assumption into Lemmas \ref{lem:tilde-en} and \ref{lem:D2omega}, respectively.
 For (3), we compute from the above energy estimates \eqref{eq:yu_s} and \eqref{eq:psi_s} that
 \begin{equation}
  \frac{1}{2}\ddt [\norm{yu_s}_{L^2}^2+\norm{\psi_s}_{L^2}^2]= -\nu\norm{y\de_y u_s}_{L^2}^2-\ip{\PP(\tilde{u}^2\tilde{\omega}),y^2u_s}-\ip{\PP(\tilde{u}^1\tilde{u}^2),\psi_s}.
 \end{equation}
 This implies the bound 
 \begin{equation}
  \frac{1}{2}\ddt [\norm{yu_s}_{L^2}^2+\norm{\psi_s}_{L^2}^2]\leq -\nu\norm{y\de_y u_s}_{L^2}^2+\norm{\tilde{u}^2}_{L^\infty}\norm{y\tilde{\omega}}_{L^2}\norm{yu_s}_{L^2}+\norm{\tilde{u}^2}_{L^\infty}\norm{\tilde{u}^1}_{L^2}\norm{\psi_s}_{L^2},
 \end{equation}
 and thus
 \begin{equation}
  \norm{yu_s(t)}_{L^2}+\norm{\psi_s(t)}_{L^2}\leq \norm{yu_s^{in}}_{L^2}+\norm{\psi_s^{in}}_{L^2}+\int_0^t \norm{\tilde{u}^2}_{L^\infty}[\norm{y\tilde{\omega}}_{L^2}+\norm{\tilde{u}^1}_{L^2}].
 \end{equation}
 Next we use the interpolation inequality $\norm{\tilde{u}}_{L^\infty}\leq C\norm{\tilde{u}}_{L^2}^{1/2}\norm{\nabla\tilde\omega}_{L^2}^{1/2}$ to conclude that
 \begin{equation}
 \begin{aligned}
  &\int_0^t \norm{\tilde{u}}_{L^\infty}[\norm{y\tilde{\omega}}_{L^2}+\norm{\tilde{u}^1}_{L^2}] \leq C \int_0^t\norm{\nabla\tilde\omega}_{L^2}^{1/2}\norm{\tilde\omega}_{X_\tau}^{3/2}\dd \tau\\
  &\qquad \leq C\left(\int_0^t\norm{\nabla\omega}_{L^2}^2 \right)^{1/4}\left(\int_0^t\norm{\tilde\omega}_{X_\tau}^2 \dd \tau\right)^{3/4} \leq C \nu^{-1/4}\lambda_\nu^{-3/4}\norm{\omega^{in}}_{L^2}^{1/2}\norm{\omega^{in}}_{X}^{3/2},
 \end{aligned}
 \end{equation}
 since $\norm{\tilde{u}}_{L^2}\leq\norm{\tilde\omega}_{X}\leq \e^{-\eps_0\lambda_\nu t}\norm{\tilde\omega}_{X_t}$. This gives the claim.
 
 For (4), we compute from the above energy estimates \eqref{eq:yomega_s} and \eqref{eq:full_energy} that
 \begin{equation}
  \frac{1}{2}\ddt [\norm{y\omega_s}_{L^2}^2+\norm{u}_{L^2}^2]\leq -\nu\norm{y\de_y \omega_s}_{L^2}^2-\ip{\PP(\tilde{u}\cdot\nabla\tilde{\omega}),y^2\omega_s}.
 \end{equation}
 Here we will bound
 \begin{equation}
  \abs{\ip{\PP(\tilde{u}\cdot\nabla\tilde{\omega}),y^2\omega_s}}\leq\norm{y\tilde{u}\cdot\nabla\tilde\omega}_{L^2}\norm{y\omega_s}_{L^2},
 \end{equation}
 so that
 \begin{equation}
  \norm{y\omega_s(t)}_{L^2}+\norm{u(t)}_{L^2}\leq \norm{y\omega_s^{in}}_{L^2}+\norm{u^{in}}_{L^2}+\int_0^t\norm{y\tilde{u}\cdot\nabla\tilde\omega}_{L^2}.
 \end{equation}
 To bound the nonlinearity in $L^2$ we proceed as follows.
We begin by invoking \eqref{eq:comm1} and noting that
\begin{equation}\label{eq:y-nonlin}
 y(u\cdot\nabla\omega)=y(\nabla^\perp\Delta^{-1}\omega\cdot\nabla\omega)=\nabla^\perp\Delta^{-1}(y\omega)\cdot\nabla\omega+2\nabla^\perp\de_y\Delta^{-2}\omega\cdot\nabla\omega+\Delta^{-1}\omega\de_x\omega.
\end{equation}
This is particularly useful in the case of $\tilde{u}$ rather than $u$, since in two-dimensions we have the Agmon inequality $\norm{f}_{L^\infty}\leq C\norm{f}_{L^2}^{1/2}\norm{f}_{\dot{H}^2}^{1/2}$, and thus the commutator terms are easily controlled. By boundedness of the Riesz transform we obtain
\begin{equation}\label{eq:easybdd}
 \norm{\nabla^\perp\de_y\Delta^{-2}\tilde{\omega}}_{L^\infty}+\norm{\Delta^{-1}\tilde{\omega}}_{L^\infty}\leq C\norm{\tilde{\omega}}_{L^2}.
\end{equation}
We can thus bound the corresponding terms as above.
On the other hand, the term $\nabla^\perp\Delta^{-1}(y\tilde{\omega})\cdot\nabla\tilde\omega$ will be treated slightly differently. By standard interpolation in 2d we have \begin{equation}\label{eq:interpol1}
 \norm{\nabla^\perp\Delta^{-1}f}_{L^{2/\theta}}\leq C_\theta\norm{\nabla^\perp\Delta^{-1}f}_{L^2}^{\theta}\norm{\nabla^\perp\Delta^{-1}f}_{\dot{H}^1}^{1-\theta},\qquad 0<\theta\ll 1,
\end{equation}
and thus
\begin{equation}\label{eq:easybdd1}
 \norm{\nabla^\perp\Delta^{-1}(y\tilde{\omega})\cdot\nabla\tilde\omega}_{L^2}\leq C_\theta \norm{\nabla^\perp\Delta^{-1}y\tilde\omega}_{L^{2/\theta}}\norm{\nabla\tilde\omega}_{L^{2/(1-\theta)}}\leq C_\theta\norm{y\tilde\omega}_{L^2}\norm{\nabla\tilde\omega}_{L^{2/(1-\theta)}}.
\end{equation}
Similarly we have
\begin{equation}\label{eq:easybdd2}
 \norm{\nabla\tilde\omega}_{L^{2/(1-\theta)}}\leq C_\theta\norm{\nabla\tilde\omega}_{L^2}^{1-\theta}\norm{D^2\tilde\omega}_{L^2}^\theta.
\end{equation}
It follows that 
 \begin{equation}\label{eq:easybdd3}
  \norm{y\tilde{u}\cdot\nabla\tilde\omega}_{L^2}\leq C_\theta^2\norm{y\tilde\omega}_{L^2}\norm{\nabla\tilde\omega}_{L^2}^{1-\theta}\norm{D^2\tilde\omega}_{L^2}^\theta+3\norm{\tilde\omega}_{L^2}\norm{\nabla\tilde\omega}_{L^2}.
 \end{equation}
 It thus follows that
 \begin{equation}
 \begin{aligned}
  \int_0^t\norm{y\tilde{u}\cdot\nabla\tilde\omega}_{L^2}  &\leq C_\theta^2 \int_0^t\norm{y\tilde\omega}_{L^2}\norm{\nabla\tilde\omega}_{L^2}^{1-\theta}\norm{D^2\tilde\omega}_{L^2}^\theta  + 3\int_0^t\norm{\tilde\omega}_{L^2}\norm{\nabla\tilde\omega}_{L^2} \\
  &\leq C_\theta^2 \left(\int_0^t \norm{y\tilde\omega}_{L^2}^2 \right)^{1/2}\left(\int_0^t \norm{\nabla\tilde\omega}_{L^2}^2 \right)^{(1-\theta)/2}\left(\int_0^t\norm{D^2\tilde\omega}_{L^2}^2 \right)^{\theta/2}\\
  &\quad + 3\left(\int_0^t \norm{\tilde\omega}_{L^2}^2 \right)^{1/2}\left( \int_0^t\norm{\nabla\tilde\omega}^2_{L^2} \right)^{1/2} \\
  &\leq C_\theta^2[\lambda_\nu^{-1/2}\nu^{(-1+\theta)/2}]\norm{\omega^{in}}_X^{2-\theta}\left(\int_0^t\norm{D^2\tilde\omega}_{L^2}^2\right)^{\theta/2}\\
 &\quad +C\lambda_\nu^{-1/2}\norm{\omega^{in}}_{X}\left( \int_0^t\norm{\nabla\tilde\omega}^2_{L^2} \right)^{1/2}.
 \end{aligned}
 \end{equation}
\end{proof}

\subsection{Proof of Theorem \ref{thm:threshold} via Bootstrapping}\label{ssec:thresh-proof}
We will prove the theorem by a continuation argument, the key step of which is the following:
\begin{bootstrap*}\label{prop:bootstrap}
 Let $\kappa_0:=\eps_0(1+\log 2C_0)^{-1}$. If $\nu>0$ is small enough, then for $0<T\leq \kappa_0^{-1}\lambda_\nu^{-1}$, the \emph{bootstrap assumption}
 \begin{equation}\label{eq:btstrap_assum}
  \norm{\tilde\omega}_{X[0,T]}\leq 4C_0\norm{\tilde\omega^{in}}_{X}
 \end{equation}
 implies the stronger bound
 \begin{equation}\label{eq:btstrap_concl}
  \norm{\tilde\omega}_{X[0,T]}\leq 2C_0\norm{\tilde\omega^{in}}_{X}.
 \end{equation}
\end{bootstrap*}

\begin{proof}[Proof of Theorem \ref{thm:threshold}]
From this the proof of the theorem follows: $\kappa_0=\eps_0(1+\log 2C_0)^{-1}$ is chosen in order to ensure that $2C_0\e^{-\eps_0\kappa_0^{-1}}\leq \e^{-1}$, and thus
\begin{equation}
 \norm{\tilde\omega(\kappa_0^{-1}\lambda_\nu^{-1})}_{X}\leq \e^{-1}\norm{\tilde\omega^{in}}_{X}.
\end{equation}
Then thanks to the local well-posedness in Lemma \ref{lem:lwp} we can iterate with time intervals of size $\kappa_0^{-1}\lambda_\nu^{-1}$: indeed, we highlight again that the estimates for the $k=0$ mode and the second order derivatives in Proposition \ref{prop:prelim} will continue to hold whenever the (bootstrap) assumption is valid.
We thereby obtain a global solution. Its decay rate can directly be seen to be
\begin{equation}
 \norm{\tilde\omega(t)}_{X}\leq \e^{-\kappa_0 \lambda_\nu t}\norm{\tilde\omega^{in}}_{X}
\end{equation}
for all $t\geq 0$. The bound on the $k=0$ mode follows from \eqref{eq:useful_bds}, thereby concluding the proof.
\end{proof}
It thus remains to prove the validity of the above implication \eqref{eq:btstrap_assum} $\Rightarrow$ \eqref{eq:btstrap_concl} in the setting
of Theorem \ref{thm:threshold}, which in particular assumes that
\begin{align}\label{eq:assumpindat}
\norm{\omega^{in}}_{X} + \norm{y u_s^{in}}_{L^2}\leq C_1\nu^{3/4+2\mu}.
\end{align}

\begin{proof}[Proof of the Bootstrap Step.]
 By Duhamel's formula we have
\begin{equation}
 \tilde{\omega}(t)=\e^{t\Lin}\tilde{\omega}^{in}-\e^{t\Lin}\int_0^t \e^{-\tau\Lin}\NP (u\cdot\nabla\omega)(\tau)\dd \tau,
\end{equation}
and hence
\begin{equation}
 \norm{\tilde{\omega}(t)}_{X_t}\leq C_0\norm{\omega^{in}}_{X}+C_0\int_0^t \e^{\eps_0 \lambda_\nu \tau}\norm{\NP (u\cdot\nabla\omega)(\tau)}_{X}\dd \tau.
\end{equation}
To conclude the proof it thus suffices to show that for $T_\nu:=\kappa_0^{-1}\lambda_\nu^{-1}$ we have
\begin{equation}
 \abs{\int_0^{T_\nu} \e^{\eps_0 \lambda_\nu \tau}\norm{\NP (u\cdot\nabla\omega)(\tau)}_{X}\dd \tau}\leq \norm{\tilde\omega^{in}}_{X}.
\end{equation}
In fact, we will prove the stronger bound
\begin{equation}\label{eq:stronger}
 \abs{\int_0^{T_\nu} \e^{\eps_0 \lambda_\nu \tau}\norm{\NP (u\cdot\nabla\omega)(\tau)}_{X}\dd \tau}\leq C\nu^{\mu}\norm{\tilde\omega^{in}}_{X},
\end{equation}
where $\mu>0$ and $C>0$ is a constant independent of $\nu$ (but depending on $\eps_0$, $\mu$ and other small, but fixed parameters $0<\delta,\theta\ll 1$). Since we have the freedom to choose $\nu$ small enough this gives the claim.

Towards this end, we note the crude estimate
\begin{align}
\norm{y^a\NP (u\cdot\nabla\omega)}_{L^2}\leq\norm{y^au_s\de_x\tilde\omega}_{L^2}+\norm{y^a\tilde{u}\cdot\nabla\omega}_{L^2},\quad a\in\{0,1\}.
\end{align}
In the remainder of this proof we give the relevant estimates term by term, making frequent use of the bounds stated in \eqref{eq:useful_bds}.

\subsection*{$L^2$ estimates}
Recall from the interpolation inequality \eqref{lem:interpol2} that there exists $C_\delta>0$ such that for $0<\delta\ll 1$ we have
\begin{equation}
 \norm{\tilde{u}}_{L^\infty}\leq C_\delta\norm{\tilde{\omega}}_{L^2}^{1-\delta}\norm{\nabla\tilde{\omega}}_{L^2}^\delta.
\end{equation}
We then use H\"older inequality with $p:=\frac{2}{1-\delta}$,  \eqref{eq:full_energy}, \eqref{eq:useful_bds} and \eqref{eq:btstrap_assum},  to conclude 
\begin{equation}\label{eq:bdutil}
 \begin{aligned}
  &\int_0^{T_\nu} \e^{\eps_0 \lambda_\nu \tau}\norm{\tilde{u}\cdot\nabla\omega}_{L^2}\dd \tau\leq C_\delta\int_0^{T_\nu} \e^{\eps_0 \lambda_\nu \tau}\norm{\tilde\omega}_{L^2}^{1-\delta}\norm{\nabla\tilde\omega}_{L^2}^\delta\norm{\nabla\omega}_{L^2}\dd \tau\\
  &\quad \leq C_\delta\left( \int_0^{T_\nu} \e^{\eps_0 p\lambda_\nu \tau}\norm{\tilde\omega}_{L^2}^2\dd \tau \right)^{1/p}\left( \int_0^{T_\nu} \norm{\nabla\tilde\omega}_{L^2}^2 \right)^{\delta/2}\left( \int_0^{T_\nu} \norm{\nabla\omega}_{L^2}^2 \right)^{1/2}\\
  &\quad \leq C_\delta\left( \int_0^{T_\nu} \e^{\eps_0 (p-2)\lambda_\nu \tau}\norm{\tilde\omega}_{X_\tau}^2\dd \tau \right)^{1/p}\left( \nu^{-1}\norm{\tilde\omega^{in}}_{L^2}^2 \right)^{\delta/2}\left( \nu^{-1}\norm{\omega^{in}}_{L^2}^2 \right)^{1/2}\\
  &\quad \leq C_\delta\nu^{-(1+\delta)/2}\norm{\tilde\omega^{in}}_{X}\norm{\omega^{in}}_{L^2}\left( \int_0^{T_\nu} \e^{\eps_0 (p-2)\lambda_\nu \tau} \dd \tau \right)^{1/p}\\
  &\quad \leq C_\delta^{3/2}\nu^{-(1+\delta)/2}\norm{\tilde\omega^{in}}_{X}\norm{\omega^{in}}_{L^2} \lambda_\nu^{-(1-\delta)/2}\\
  &\quad =C_\delta^{3/2}\nu^{-(3+\delta)/4}\norm{\omega^{in}}_{L^2}\norm{\tilde\omega^{in}}_{X}.
 \end{aligned}
\end{equation}
Hence it suffices to choose $\delta=\mu$ and use \eqref{eq:assumpindat} to obtain the desired estimate
\begin{equation}\label{eq:bdutil2}
 \begin{aligned}
  &\int_0^{T_\nu} \e^{\eps_0 \lambda_\nu \tau}\norm{\tilde{u}\cdot\nabla\omega}_{L^2}\dd \tau\leq C\nu^\mu\norm{\tilde\omega^{in}}_{X}.
 \end{aligned}
\end{equation}
On the other hand, for the second bilinear term we notice that by the interpolation inequality \eqref{lem:interpol1} we have
\begin{equation}
 \norm{u_s}_{L^\infty}=\norm{u_s}_{L^\infty_y}=\norm{\de_y\abs{\de_y}^{-2}\omega_s}_{L^\infty_y}\leq C\norm{\omega_s}_{\dot{H}^{-1}_y}^{1/2}\norm{\omega_s}_{L^2_y}^{1/2},
\end{equation}
so that 
\begin{equation}
\begin{aligned}\label{eq:bdus}
 &\int_0^{T_\nu} \e^{\eps_0 \lambda_\nu \tau}\norm{u_s\de_x\tilde\omega}_{L^2}\dd \tau\leq \int_0^{T_\nu} \e^{\eps_0 \lambda_\nu \tau}\norm{u_s}_{L^\infty}\norm{\nabla\tilde\omega}_{L^2}\dd \tau\\
 &\qquad \leq  C\sup_{0\leq t\leq T_\nu}\left[\norm{\omega_s}_{\dot{H}^{-1}_y}^{1/2}\norm{\omega_s}_{L^2_y}^{1/2}\right]\left(\int_0^{T_\nu} \e^{2\eps_0 \lambda_\nu \tau}\dd \tau \right)^{1/2}\nu^{-1/2}\norm{\tilde\omega^{in}}_{L^2}\\
 &\qquad \leq  C\nu^{-1/2}\lambda_\nu^{-1/2} \norm{u_s}_{L^\infty_t L^2_y}^{1/2}\norm{\omega^{in}}_{L^2}^{1/2}\norm{\tilde\omega^{in}}_{L^2}\\
 &\qquad \leq  C\nu^{-3/4}(1+\abs{\log\nu})^{1/2}\norm{u_s}_{L^\infty_t L^2_y}^{1/2}\norm{\omega^{in}}_{L^2}^{1/2} \norm{\tilde\omega^{in}}_{L^2}.
\end{aligned}
\end{equation}
Appealing to \eqref{eq:useful_bds} and \eqref{eq:assumpindat}, we obtain the desired estimate
\begin{align}\label{eq:bdus2}
 &\int_0^{T_\nu} \e^{\eps_0 \lambda_\nu \tau}\norm{u_s\de_x\tilde\omega}_{L^2}\dd \tau\leq C\nu^\mu\norm{\tilde\omega^{in}}_{X},
\end{align}
completing the first part of the argument.

\subsection*{Weight $y$ in $L^2$} 
As above in \eqref{eq:bdutil} and \eqref{eq:bdus}, we prove separately the bounds involving $\tilde u$ and $u_s$.
We make again use of \eqref{eq:y-nonlin} and \eqref{eq:easybdd}, and arrive at the analogous of \eqref{eq:easybdd3}, namely
 \begin{equation}
  \norm{y\tilde{u}\cdot\nabla\omega}_{L^2}\leq C_\theta^2\norm{y\tilde\omega}_{L^2}\norm{\nabla\omega}_{L^2}^{1-\theta}\norm{D^2\omega}_{L^2}^\theta+3\norm{\tilde\omega}_{L^2}\norm{\nabla\omega}_{L^2}.
 \end{equation}
It follows that
\begin{equation}
 \begin{aligned}
  &\int_0^{T_\nu} \e^{\eps_0 \lambda_\nu \tau}\norm{y\tilde{u}\cdot\nabla\omega}_{L^2}\dd \tau 
\leq C_\theta^2\int_0^{T_\nu} \e^{\eps_0 \lambda_\nu \tau} \left[\norm{y\tilde\omega}_{L^2}\norm{\nabla\omega}_{L^2}^{1-\theta}\norm{D^2\omega}_{L^2}^\theta+\norm{\tilde\omega}_{L^2}\norm{\nabla\omega}_{L^2}\right] \dd \tau.
 \end{aligned}
\end{equation}
Now, the second term can be estimated as in \eqref{eq:bdutil}, obtaining 
\begin{equation}
 \begin{aligned}
\int_0^{T_\nu} \e^{\eps_0 \lambda_\nu \tau}\norm{\tilde\omega}_{L^2}\norm{\nabla\omega}_{L^2} \dd \tau
&\leq C\left( \int_0^{T_\nu} \e^{2\eps_0 \lambda_\nu \tau}\norm{\tilde\omega}_{L^2}^2\dd \tau \right)^{1/2}\left( \int_0^{T_\nu} \norm{\nabla\omega}_{L^2}^2 \right)^{1/2}\\
  & \leq C\left( \int_0^{T_\nu} \norm{\tilde\omega}_{X_\tau}^2\dd \tau \right)^{1/2}\left( \nu^{-1}\norm{\omega^{in}}_{L^2}^2 \right)^{1/2}\\
  & \leq C\nu^{-3/4}(1+\abs{\log\nu})^{1/2}\norm{\tilde\omega^{in}}_{X}\norm{\omega^{in}}_{L^2}.
 \end{aligned}
\end{equation}
while the first term is handled as
\begin{equation}
 \begin{aligned}
&\int_0^{T_\nu} \e^{\eps_0 \lambda_\nu \tau}\norm{y\tilde\omega}_{L^2}\norm{\nabla\omega}_{L^2}^{1-\theta}\norm{D^2\omega}_{L^2}^\theta \dd \tau\\
  &\qquad \leq C_\theta^2 \left(\int_0^{T_\nu} \e^{2\eps_0 \lambda_\nu \tau}\norm{y\tilde\omega}_{L^2}^2 \dd \tau\right)^{\frac{1}{2}} \left(\int_0^{T_\nu}\norm{\nabla\omega}_{L^2}^2 \right)^{\frac{1-\theta}{2}} \left(\int_0^{T_\nu}\norm{D^2\omega}_{L^2}^2 \right)^{\frac{\theta}{2}}\\
  &\qquad \leq C_\theta^2 \nu^{-3/4+\mu\theta}(1+\abs{\log\nu})^{1/2}\norm{\omega^{in}}_{L^2}^{1-\theta}\norm{\tilde\omega^{in}}_{X},
 \end{aligned}
\end{equation}
where in the last step we used the energy estimates \eqref{eq:full_energy} and the bound \eqref{eq:useful_bds} for second order derivatives.
Choosing $\theta=\frac{4\mu}{4\mu+3}$, using \eqref{eq:assumpindat}, and adding the above two estimates implies that
\begin{equation}\label{eq:lfasd3}
  \int_0^{T_\nu} \e^{\eps_0 \lambda_\nu \tau}\norm{y\tilde{u}\cdot\nabla\omega}_{L^2}\dd \tau \leq C\nu^\mu\norm{\tilde\omega^{in}}_{X}.
\end{equation}
Finally, for the term with $y(u_s\de_x\tilde\omega)$ we proceed as follows: By \eqref{eq:useful_bds}, under our assumption on the initial data  \eqref{eq:assumpindat} we have the bound 
\begin{equation}
 \norm{yu_s}_{L^\infty}\leq C \norm{yu_s}_{L^2_y}^{1/2}\norm{yu_s}_{\dot{H}_y^{1}}^{1/2}\leq C \norm{yu_s}_{L^2_y}^{1/2}[\norm{y\omega_s}_{L^2_y}^{1/2}+\norm{u_s}_{L^2_y}^{1/2}]\leq C \nu^{3/4+2\mu}.
\end{equation}
Hence
\begin{equation}\label{eq:lfasd4}
\begin{aligned}
 \int_0^{T_\nu} \e^{\eps_0 \lambda_\nu \tau}\norm{yu_s\de_x\tilde\omega}_{L^2}\dd \tau&\leq \int_0^{T_\nu} \e^{\eps_0 \lambda_\nu \tau}\norm{yu_s}_{L^\infty}\norm{\nabla\tilde\omega}_{L^2}\dd \tau\\
 & \leq C\nu^{3/4+2\mu}\left(\int_0^{T_\nu} \e^{2\eps_0 \lambda_\nu \tau}\dd \tau \right)^{1/2}\nu^{-1/2}\norm{\tilde\omega^{in}}_{L^2}\\
 & \leq C\nu^{1/4+2\mu}\lambda_\nu^{-1/2}\norm{\tilde\omega^{in}}_{L^2}\\
 & = C\nu^{2\mu}(1+\abs{\log\nu})^{1/2} \norm{\tilde\omega^{in}}_{L^2}.
\end{aligned}
\end{equation}
Collecting \eqref{eq:bdutil2}, \eqref{eq:bdus2}, \eqref{eq:lfasd3} and \eqref{eq:lfasd4}, we deduce \eqref{eq:stronger}, therefore concluding the proof.

\end{proof}

\addtocontents{toc}{\protect\setcounter{tocdepth}{-1}}
\section*{Acknowledgements}
T.\ Elgindi acknowledges funding from the NSF DMS-1817134. K.\ Widmayer acknowledges funding from SNSF Grant 157694. The authors thank the departments of mathematics at EPFL, Imperial College and UCSD where a part of this work was completed.  

\addtocontents{toc}{\protect\setcounter{tocdepth}{1}}


\begin{bibdiv}
\begin{biblist}

\bib{BW13}{article}{
      author={Beck, Margaret},
      author={Wayne, C.~Eugene},
       title={Metastability and rapid convergence to quasi-stationary bar
  states for the two-dimensional {N}avier-{S}tokes equations},
        date={2013},
     journal={Proc. Roy. Soc. Edinburgh Sect. A},
      volume={143},
      number={5},
       pages={905\ndash 927},
         url={http://dx.doi.org/10.1017/S0308210511001478},
}

\bib{BGM15I}{article}{
      author={{Bedrossian}, J.},
      author={{Germain}, P.},
      author={{Masmoudi}, N.},
       title={{Dynamics near the subcritical transition of the 3D Couette flow
  I: Below threshold case}},
        date={2015-06},
     journal={ArXiv e-prints, to appear in Mem. Amer. Math. Soc. },
      eprint={1506.03720},
}

\bib{BGM15II}{article}{
      author={{Bedrossian}, J.},
      author={{Germain}, P.},
      author={{Masmoudi}, N.},
       title={{Dynamics near the subcritical transition of the 3D Couette flow
  II: Above threshold case}},
        date={2015-06},
     journal={ArXiv e-prints},
      eprint={1506.03721},
}

\bib{BCZ15}{article}{
      author={Bedrossian, Jacob},
      author={Coti~Zelati, Michele},
       title={Enhanced dissipation, hypoellipticity, and anomalous small noise
  inviscid limits in shear flows},
        date={2017},
     journal={Arch. Ration. Mech. Anal.},
      volume={224},
      number={3},
       pages={1161\ndash 1204},
}

\bib{BCZGH15}{article}{
      author={Bedrossian, Jacob},
      author={Coti~Zelati, Michele},
      author={Glatt-Holtz, Nathan},
       title={Invariant {M}easures for {P}assive {S}calars in the {S}mall
  {N}oise {I}nviscid {L}imit},
        date={2016},
     journal={Comm. Math. Phys.},
      volume={348},
      number={1},
       pages={101\ndash 127},
}

\bib{BGM15III}{article}{
      author={Bedrossian, Jacob},
      author={Germain, Pierre},
      author={Masmoudi, Nader},
       title={On the stability threshold for the 3{D} {C}ouette flow in
  {S}obolev regularity},
        date={2017},
     journal={Ann. of Math. (2)},
      volume={185},
      number={2},
       pages={541\ndash 608},
}

\bib{BGM18}{article}{
      author={Bedrossian, Jacob},
      author={Germain, Pierre},
      author={Masmoudi, Nader},
       title={Stability of the couette flow at high reynolds numbers in two
  dimensions and three dimensions},
        date={2018},
     journal={Bulletin of the American Mathematical Society},
}

\bib{BMV14}{article}{
      author={Bedrossian, Jacob},
      author={Masmoudi, Nader},
      author={Vicol, Vlad},
       title={Enhanced dissipation and inviscid damping in the inviscid limit
  of the {N}avier-{S}tokes equations near the two dimensional {C}ouette flow},
        date={2016},
     journal={Arch. Ration. Mech. Anal.},
      volume={219},
      number={3},
       pages={1087\ndash 1159},
         url={http://dx.doi.org/10.1007/s00205-015-0917-3},
}

\bib{BVW16}{article}{
      author={Bedrossian, Jacob},
      author={Vicol, Vlad},
      author={Wang, Fei},
       title={The {S}obolev stability threshold for 2{D} shear flows near
  {C}ouette},
        date={2018},
     journal={J. Nonlinear Sci.},
      volume={28},
      number={6},
       pages={2051\ndash 2075},
}

\bib{BL94}{article}{
      author={Bernoff, Andrew~J.},
      author={Lingevitch, Joseph~F.},
       title={Rapid relaxation of an axisymmetric vortex},
        date={1994},
     journal={Phys. Fluids},
      volume={6},
      number={11},
       pages={3717\ndash 3723},
}

\bib{CLWZ18}{article}{
      author={{Chen}, Qi},
      author={{Li}, Te},
      author={{Wei}, Dongyi},
      author={{Zhang}, Zhifei},
       title={{Transition threshold for the 2-D Couette flow in a finite
  channel}},
        date={2018-08},
     journal={arXiv e-prints},
      eprint={1808.08736},
}

\bib{CKRZ08}{article}{
      author={Constantin, P.},
      author={Kiselev, A.},
      author={Ryzhik, L.},
      author={Zlatos, A.},
       title={Diffusion and mixing in fluid flow},
        date={2008},
     journal={Ann. of Math. (2)},
      volume={168},
      number={2},
       pages={643\ndash 674},
         url={http://dx.doi.org/10.4007/annals.2008.168.643},
}

\bib{CZDE18}{article}{
      author={{Coti Zelati}, M.},
      author={{Delgadino}, M.G.},
      author={{Elgindi}, T.M.},
       title={{On the relation between enhanced dissipation time-scales and
  mixing rates}},
        date={2018-06},
     journal={ArXiv e-prints, to appear in Comm. Pure Appl. Math.},
      eprint={1806.03258},
}

\bib{DrazinReid81}{book}{
      author={{Drazin}, P.G.},
      author={{Reid}, W.H.},
       title={Hydrodynamic stability},
   publisher={Cambridge University Press, Cambridge},
        date={1981},
}

\bib{Gallay2017}{article}{
      author={{Gallay}, T.},
       title={{Enhanced dissipation and axisymmetrization of two-dimensional
  viscous vortices}},
        date={2017-07},
     journal={ArXiv e-prints},
      eprint={1707.05525},
}

\bib{GNRS18}{article}{
      author={{Grenier}, E.},
      author={{Nguyen}, T.~T.},
      author={{Rousset}, F.},
      author={{Soffer}, A.},
       title={{Linear inviscid damping and enhanced viscous dissipation of
  shear flows by using the conjugate operator method}},
        date={2018-04},
     journal={ArXiv e-prints},
      eprint={1804.08291},
}

\bib{IMM17}{article}{
      author={{Ibrahim}, S.},
      author={{Maekawa}, Y.},
      author={{Masmoudi}, N.},
       title={{On pseudospectral bound for non-selfadjoint operators and its
  application to stability of Kolmogorov flows}},
        date={2017-10},
     journal={ArXiv e-prints},
      eprint={1710.05132},
}

\bib{Kelvin87}{article}{
      author={Kelvin, Lord},
       title={Stability of fluid motion: rectilinear motion of viscous fluid
  between two parallel plates},
        date={1887},
     journal={Phil. Mag.},
      volume={24},
      number={5},
       pages={188\ndash 196},
}

\bib{LB01}{article}{
      author={Latini, Marco},
      author={Bernoff, Andrew~J},
       title={Transient anomalous diffusion in poiseuille flow},
        date={2001},
     journal={J. Fluid Mech.},
      volume={441},
       pages={399\ndash 411},
}

\bib{LiWeiZhang2017}{article}{
      author={{Li}, T.},
      author={{Wei}, D.},
      author={{Zhang}, Z.},
       title={{Pseudospectral and spectral bounds for the Oseen vortices
  operator}},
        date={2017-01},
     journal={ArXiv e-prints},
      eprint={1701.06269},
}

\bib{LWZ18}{article}{
      author={{Li}, Te},
      author={{Wei}, Dongyi},
      author={{Zhang}, Zhifei},
       title={{Pseudospectral bound and transition threshold for the 3D
  Kolmogorov flow}},
        date={2018-01},
     journal={arXiv e-prints},
      eprint={1801.05645},
}

\bib{Liss18}{article}{
      author={{Liss}, Kyle},
       title={{On the Sobolev stability threshold of 3D Couette flow in a
  homogeneous magnetic field}},
        date={2018-12},
     journal={arXiv e-prints},
      eprint={1812.11540},
}

\bib{Lundgren82}{article}{
      author={Lundgren, T.S.},
       title={Strained spiral vortex model for turbulent fine structure},
        date={1982},
     journal={Phys. Fluids},
      volume={25},
      number={12},
       pages={2193\ndash 2203},
}

\bib{rayleigh1879}{article}{
      author={Rayleigh, Lord},
       title={On the stability, or instability, of certain fluid motions},
        date={1879},
     journal={Proceedings of the London Mathematical Society},
      volume={1},
      number={1},
       pages={57\ndash 72},
}

\bib{Reynolds83}{article}{
      author={Reynolds, O.},
       title={An experimental investigation of the circumstances which
  determine whether the motion of water shall be direct or sinuous, and of the
  law of resistance in parallel channels},
        date={1883},
     journal={Proc. R. Soc. Lond.},
      volume={174},
       pages={935\ndash 982},
}

\bib{rhines1983rapidly}{article}{
      author={Rhines, Peter~B.},
      author={Young, William~R.},
       title={How rapidly is a passive scalar mixed within closed
  streamlines?},
        date={1983},
     journal={Journal of Fluid Mechanics},
      volume={133},
       pages={133\ndash 145},
}

\bib{Villani09}{article}{
      author={Villani, C{\'e}dric},
       title={Hypocoercivity},
        date={2009},
     journal={Mem. Amer. Math. Soc.},
      volume={202},
      number={950},
       pages={iv+141},
         url={http://dx.doi.org/10.1090/S0065-9266-09-00567-5},
}

\bib{WZ18}{article}{
      author={{Wei}, D.},
      author={{Zhang}, Z.},
       title={{Transition threshold for the 3D Couette flow in Sobolev space}},
        date={2018-03},
     journal={ArXiv e-prints},
      eprint={1803.01359},
}

\bib{WZZkolmo17}{article}{
      author={{Wei}, D.},
      author={{Zhang}, Z.},
      author={{Zhao}, W.},
       title={{Linear inviscid damping and enhanced dissipation for the
  Kolmogorov flow}},
        date={2017-11},
     journal={ArXiv e-prints},
      eprint={1711.01822},
}

\bib{WEI18}{article}{
      author={{Wei}, Dongyi},
       title={{Diffusion and mixing in fluid flow via the resolvent estimate}},
        date={2018-11},
     journal={arXiv e-prints},
      eprint={1811.11904},
}

\bib{WPKM08}{article}{
      author={Willis, A.~P.},
      author={Peixinho, J.},
      author={Kerswell, R.~R.},
      author={Mullin, T.},
       title={Experimental and theoretical progress in pipe flow transition},
        date={2008},
     journal={Philos. Trans. R. Soc. Lond. Ser. A Math. Phys. Eng. Sci.},
      volume={366},
      number={1876},
       pages={2671\ndash 2684},
}

\end{biblist}
\end{bibdiv}

\end{document}